\documentclass[11pt]{article}

\usepackage[T1]{fontenc}
\usepackage{amsmath,amssymb,amsthm}
\usepackage[shortlabels]{enumitem}
\usepackage[margin=1in]{geometry}
\usepackage[hidelinks]{hyperref}

\DeclareMathOperator{\diam}{diam}
\DeclareMathOperator{\Div}{div}
\DeclareMathOperator{\supp}{supp}
\newcommand{\R}{\mathbb{R}}

\numberwithin{equation}{section}
\allowdisplaybreaks
\theoremstyle{plain}
\newtheorem{theorem}{Theorem}[section]
\newtheorem{lemma}[theorem]{Lemma}
\newtheorem{corollary}[theorem]{Corollary}
\newtheorem{proposition}[theorem]{Proposition}

\theoremstyle{definition}
\newtheorem{definition}[theorem]{Definition}
\newtheorem{remark}[theorem]{Remark}

\begin{document}

\title{Stability of Serrin-type overdetermined problems for the
  $p$-Laplacian and related operators}

\author{%
  Yingfang Zhang\thanks{Corresponding author. Email:
    \href{mailto:zhangyf22@mails.tsinghua.edu.cn}%
    {zhangyf22@mails.tsinghua.edu.cn}.}
  \quad
  Wenming Zou\thanks{Email:
    \href{mailto:zou-wm@mail.tsinghua.edu.cn}%
    {zou-wm@mail.tsinghua.edu.cn}.}\\[0.5em]
  \small Department of Mathematical Sciences, Tsinghua University\\
  \small Beijing 100084, China
}
\date{}

\maketitle

\begin{abstract}
We investigate a Serrin-type overdetermined problem for the $p$-Laplacian and establish a quantitative stability estimate under suitable convexity and regularity assumptions on the domain, the solution, and the operator. Our method also applies to a class of generalized divergence-form elliptic operators.
\end{abstract}

\medskip
\noindent\textbf{2020 Mathematics Subject Classification.}
Primary 35B35; Secondary 35J92, 35N25.

\smallskip
\noindent\textbf{Keywords.}
Serrin-type overdetermined problems, $p$-Laplacian,
quantitative stability.

\section{Introduction}
Overdetermined elliptic problems provide a classical mechanism through which information about a solution determines the geometry of the domain. A typical example is Serrin's problem: if a bounded domain $\Omega\subset \R^N$ admits a solution of
\begin{equation*}
    \begin{cases}
        -\Delta u = 1 & \text{in $\Omega$},\\
        u=0,\quad |\nabla u|=\mathbf{c} & \text{on $\partial\Omega$}
    \end{cases},
\end{equation*}
then $\Omega$ is a ball. Here $N\ge 2$ and $\mathbf{c}>0$ is a constant. Serrin \cite{Serrin1971} introduced the moving-plane method and proved this result under the assumptions that $u\in C^2(\bar{\Omega})$ and $\Omega$ is of class $C^2$. Shortly afterward, Weinberger \cite{Weinberger1971} gave a different proof based on a ``P-function'' and an integral identity. These arguments initiated two of the principal approaches to overdetermined problems.

Serrin-type problems have been extended to nonlinear and quasilinear operators. For the $p$-Laplacian equation, Garofalo and Lewis \cite{Garofalo1989} proved a symmetry theorem for a broad class of quasilinear overdetermined problems. Damascelli and Pacella \cite{Damascelli2000} obtained the corresponding Serrin consequence for $1<p<2$ through the moving plane method, while Brock and Henrot \cite{Brock2002a} developed a proof based on continuous rearrangement and domain derivatives. Continuous Steiner symmetrization has subsequently become a useful tool for symmetry questions involving
degenerate elliptic equations. In particular, Cao, Wei, and Zhan \cite{Cao2025} established Serrin-type symmetry results for a class of divergence-form equations in nonsmooth domains.

Other extensions include the result of Kumaresan and Prajapat \cite{Kumaresan1998} in hyperbolic space, the $p$-Laplacian result of Ruan, Huang, and Chen \cite{Ruan2024} on Riemannian manifolds, and the rough-domain theorem of Figalli and Zhang \cite{Figalli2025} for the classical Serrin problem.


A natural quantitative question is whether an almost constant Neumann datum forces the domain to be close to a ball. The first quantitative result for Serrin's problem using moving-plane is due to Aftalion, Busca, and Reichel \cite{Aftalion1999}, who obtained an approximate radial-symmetry estimate of logarithmic type. For the torsion problem, Brandolini, Nitsch, Salani, and Trombetti \cite{Brandolini2008a} obtained stronger quantitative estimates using integral identities. Ciraolo, Magnanini, and Vespri \cite{Ciraolo2016} later established H\"{o}lder stability for a broad class of semilinear equations using a refined quantitative moving-plane method. Another line of work uses integral identities and geometric inequalities: Feldman \cite{Feldman2018} obtained a linear stability estimate controlled by the weak boundary quantity $\lVert |\nabla u|^2-\mathbf{c}^2\rVert_{L^2(\partial\Omega)}$, while Magnanini and Poggesi \cite{Magnanini2020} developed improved, nearly optimal estimates for Serrin's problem and clarified its close connection with Alexandrov's soap-bubble theorem. Quantitative stability has also been extended to nonlocal Serrin problems; see \cite{Dipierro2023, Thompson2025}.

Related quantitative symmetry results for the $p$-Laplacian were recently obtained by Ciraolo and Li \cite{Ciraolo2025a}, who estimated the deviation from Schwarz symmetry in terms of the isoperimetric deficit, and by Dipierro, Gonçalves da Silva, Poggesi, and Valdinoci \cite{DipierroPog2025}, who developed a quantitative Gidas--Ni--Nirenberg-type result via integral identities. These works concern quantitative symmetry of solutions, whereas here we derive quantitative closeness of the domain to a ball from an almost constant boundary gradient.

The present work follows a different quantitative route, motivated by the continuous Steiner symmetrization proof of qualitative symmetry. For $1<p<\infty$, we consider the $p$-Laplacian problem 
\begin{equation}\label{Eq1}
    \begin{cases}
        -\Delta_p u = f(u) & \text{in $\Omega$}, \\
        u = 0& \text{on $\partial\Omega$},
    \end{cases}
\end{equation}
with boundary defect $\epsilon_0=\lVert |\nabla u|-\mathbf{c}\rVert_{L^{\infty}(\partial\Omega)}$. Our main result is as follows.

\begin{theorem}\label{mainthm}
    Assume that, for some $s_*>0$, $L>0$, and $\alpha\in(0,1]$, the following conditions hold:
    \begin{enumerate}[(a)]
        \item $\Omega$ is a bounded domain in $\R^N$, $f$ is continuous and nonnegative, and $u\in C^1(\bar\Omega)$ solves \eqref{Eq1} with $\big||\nabla u|-\mathbf{c}\big|\le \epsilon_0$ on $\partial\Omega$, where $\epsilon_0 < \min\{1,\mathbf{c}/4\}$;
        \item $|\nabla u(x)-\nabla u(y)| \le L|x-y|^\alpha$ in $\bar{\Omega}$;
        \item every superlevel set $\{u>r\}$ with $0\le r\le s_*$ is convex;
        \item $\partial\Omega$ is of class $C^1$, and the inradius $\rho_i$ of $\Omega$ is positive.
    \end{enumerate}
    Then there exists a constant $C>0$ depending only on $\diam(\Omega), \rho_i,N,p,\mathbf{c}$, $C_0:=\lVert u\rVert_{C^1(\bar\Omega)}$, $C_1:=\lVert f\rVert_{C^0([0,C_0])}$, $L$, $\alpha$, and $s_*$ such that
    \begin{equation}\label{main est}
        |\Omega\Delta B_\Omega(q)| \le C \epsilon_0^{\frac{\alpha}{2(\alpha+1)}}
    \end{equation}
    for some $q\in\R^N$, where $B_{\Omega}(q)$ denotes the ball centered at $q$ with the same volume as $\Omega$.
\end{theorem}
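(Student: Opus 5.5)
The proof will be a quantitative version of the continuous Steiner symmetrization argument of Brock and Henrot, run on the thin boundary layer $\{0<u<s\}$ with $s\le s_*$ a small parameter chosen at the end.

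\emph{Step 1: the boundary layer is almost a collar.} Since $|\nabla u|\in[\mathbf c-\epsilon_0,\mathbf c+\epsilon_0]$ on $\partial\Omega$, $\epsilon_0<\mathbf c/4$, and $\nabla u$ is $C^{0,\alpha}$ with constant $L$, we get $|\nabla u|\in[\mathbf c/2,2\mathbf c]$ on a layer of width $\delta\sim(\mathbf c/L)^{1/\alpha}$. Integrating along gradient lines, $u(x)=\mathbf c\, d(x)+O(\epsilon_0 d+Ld^{1+\alpha})$ with $d=\operatorname{dist}(x,\partial\Omega)$. Hence, for $s$ small,
\[
\Omega_s:=\{u>s\}\quad\text{satisfies}\quad \Omega_s^{-}\subset\Omega_s\subset\Omega_s^{+},\qquad \Omega_s^{\pm}=\{d>s/\mathbf c\mp\eta(s)\},\quad \eta(s)\lesssim \epsilon_0 s+s^{1+\alpha}.
\]
Condition (c) makes each $\Omega_s$ convex, and (d) together with $C_0$ and $\diam\Omega$ give uniform control of perimeters and of the inradius of $\Omega_s$.

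\emph{Step 2: quantitative Steiner comparison.} Fix a direction $e$ and let $\Omega^{*}$, $u^{*}$ be the Steiner symmetrizations in $e$. The qualitative proof uses the fact that the layer $\{0<u<s\}$ is, up to second order, a parallel set of fixed width $s/\mathbf c$. For the Steiner symmetrization of a parallel set, the Brunn--Minkowski/Steiner formula forces equality in the mixed-volume inequality, which forces symmetry of each one-dimensional section. Quantitatively, I will compare $|\{0<u<s\}|$ with $|\{0<u^*<s\}|$. Equimeasurability gives equality of the two volumes. By Step 1,
\[
|\Omega\setminus\Omega_s|=P(\Omega)\,s/\mathbf c+O(s^2+\eta(s)),
\]
while for the symmetrized layer the perimeter $P(\Omega^*)$ appears instead. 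This yields
\[
P(\Omega)-P(\Omega^*)\lesssim s+\eta(s)/s\lesssim s+\epsilon_0+s^{\alpha}.
\]
This estimate is too weak, because I want the $s^\alpha$ term paired against $\epsilon_0/s$. So instead I will compare the sets at two levels, $0$ and $s$, and use the convexity of $\Omega_s$. The width function $w(x')$ of sections in direction $e$ then satisfies $w_\Omega-w_{\Omega_s}=2s/\mathbf c+O(\eta/\cdots)$ sectionwise, while the midpoints of the sections of $\Omega$ and $\Omega_s$ differ by at most $O(\eta(s)/s)$ after dividing by the slope. Balancing $\epsilon_0 s$ against $s^{1+\alpha}$ over a denominator $s$, and allowing a square-root loss from a Bonnesen/Fuglede-type stability inequality for convex sets, I expect an asymmetry in direction $e$ of order $\epsilon_0^{\alpha/(2(\alpha+1))}$, with the choice $s\sim\epsilon_0^{1/(1+\alpha)}$. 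The inradius $\rho_i$ keeps the slope bounds uniform away from the sections near the edge of the projection.

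\emph{Step 3: from all directions to a ball.} The estimate of Step 2 holds uniformly in $e$. It says that $\Omega$ is $\sigma$-close in measure, with $\sigma=C\epsilon_0^{\alpha/(2(\alpha+1))}$, to its reflection across some hyperplane $H_e$ orthogonal to $e$. Applying this to $N$ coordinate directions locates a point $q$. Applying it to further directions shows that each $H_e$ passes within $O(\sigma)$ of $q$, using the bound on $\diam\Omega$. A set that is almost symmetric with respect to all hyperplanes through $q$ is close in measure to $B_\Omega(q)$; this follows by averaging over rotations, which is a standard quantitative argument. This gives \eqref{main est}.

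The main obstacle is Step 2. The difficulty is converting the sectionwise near-parallel structure into a measure estimate of asymmetry with the stated exponent, and in particular handling sections near the boundary of the projection, where the slopes degenerate. There I would first try to cut off a region of measure $O(s)$ and absorb it into the error.
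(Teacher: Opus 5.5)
There is a genuine gap: your Steps 1 and 2 never use the equation $-\Delta_p u=f(u)$. They rely only on near-constancy of $|\nabla u|$ on $\partial\Omega$, the H\"older bound (b), and convexity of superlevel sets, and these cannot imply the theorem. Take any smooth uniformly convex $\Omega$ that is not a ball, with distance function $d$ smooth on a collar $\{d<r_0\}$. Set $u=\mathbf{c}\,\psi(d)$, where $\psi(t)=t$ for $t<r_0/2$ and $\psi$ is smooth, nondecreasing, and constant for $t\ge 3r_0/4$. Then $u=0$ on $\partial\Omega$, $u>0$ in $\Omega$, $\epsilon_0=0$, and (b)--(d) hold. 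The sets $\{u>r\}$, $r<\mathbf{c}r_0/2$, are exactly inner parallel bodies, so $\eta\equiv0$ in your Step 1, yet $\Omega$ is not a ball. This $u$ fails only the equation: on the collar $-\Delta_p u=\mathbf{c}^{p-1}H$, with $H$ the sum of principal curvatures of the parallel surfaces, and $H$ is not a function of $u$ alone unless $\Omega$ is a ball.

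The false step is the claim that ``for the symmetrized layer the perimeter $P(\Omega^*)$ appears instead.'' The set $\{u^*>s\}=(\Omega_s)^*$ is the Steiner symmetral of an (almost) inner parallel set. Since $K^*+L^*\subseteq(K+L)^*$ for convex $K,L$, you only get $(\Omega_s)^*+(s/\mathbf{c})B_1\subseteq\Omega^*$ up to errors. So $(\Omega_s)^*$ lies inside the inner parallel set of $\Omega^*$, and comparing volumes gives only $P(\Omega^*)\le P(\Omega)+o(1)$, the trivial direction. Nothing forces $|\nabla u^*|\approx\mathbf{c}$ near $\partial\Omega^*$. Your ``two levels'' refinement via section widths and midpoints has the same defect: it describes parallel bodies of a convex set, which every convex body satisfies.

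The missing idea is the variational use of the PDE, which is the core of the Brock--Henrot mechanism you invoke. The paper proceeds as follows.
\begin{enumerate}
  \item It tests the weak form of \eqref{Eq1} with $(u-\gamma)_+^t-(u-\gamma)_+$. This is admissible because $|u^t-u|\le C_0\diam(\Omega)t<\gamma$.
  \item It treats the $f$-term by Cavalieri's principle.
  \item It applies the Cao--Wei--Zhan inequality, using $|\nabla u|=\mathbf{c}+O(\gamma^\alpha+\epsilon_0)$ in the thin layer $\{|u-\gamma|\le C_0\diam(\Omega)t\}$. This bounds the first variation of $\int|\nabla u_{\gamma,s}^t|^p$ below by $-Ct(\gamma+\gamma^\alpha+\epsilon_0)$.
  \item Condition (c) enters here: $\lambda\mapsto\int|\nabla u_{\gamma,s}^t|^p$, with $\lambda=(1-e^{-t})/2$, is convex because sections of superlevel sets are intervals. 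This upgrades the slope bound at $t=0$ to a bound $C\epsilon_0$ on the total energy drop to $t=\infty$, for $\gamma<\epsilon_0^{1/\alpha}$.
  \item Only then does the coarea formula convert energy to perimeter: $\int_\gamma^s(P(E_y)-P(E_y^\infty))\,dy\lesssim\epsilon_0+s(s^\alpha+\epsilon_0)$.
  \item This yields a level $s_0\in(s/2,s)$ with perimeter deficit $\lesssim s^\alpha+\epsilon_0/s$, to which Barchiesi--Cagnetti--Fusco applies.
\end{enumerate}

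Several of your ingredients do match the paper: the balancing $s\sim\epsilon_0^{1/(1+\alpha)}$, the square-root loss from a stability inequality for convex sets, and Step 3 (reflections, barycenter, averaging over $SO(N)$). The paper obtains the directional estimate only for a.e.\ direction and passes to all directions by continuity. However, the $\epsilon_0/s$ term your balancing needs can only come from the energy argument above, not from the geometry of parallel sets.
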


Under additional regularity and convexity assumptions on the data and the domain, we obtain the following corollary.

\begin{corollary}\label{maincor1}
    Suppose $\Omega$ is a bounded domain in $\R^N$ and that \eqref{Eq1} admits a solution $u\in C^{1}(\bar\Omega)$ with $\epsilon_0<\min\{1,\mathbf{c}/4\}$ and $C_0:=\lVert u\rVert_{C^1(\bar \Omega)}<\infty$.
    \begin{enumerate}[(i)]
        \item If $\Omega$ is uniformly convex, $\partial\Omega$ is of class $C^{2,\beta}$, and $f\in C^\beta([0,C_0])$ is nonnegative for some $\beta\in(0,1]$, then there exists a constant $C''>0$ depending only on $N$, $p$, $\beta$, $\mathbf{c}$, $C_0$, $\lVert f\rVert_{C^\beta([0,C_0])}$, $\diam(\Omega)$, the lower curvature bound $\kappa_0$ of $\partial\Omega$, the inradius $\rho_i$ of $\Omega$ and $\mathcal{C}_{2,\beta}(\Omega)$ such that
        \begin{equation}\label{main}
            |\Omega\Delta B_{\Omega}(q)| \le C''\epsilon_0^{1/4}
        \end{equation}
        for some $q\in\R^N$, where $\mathcal{C}_{2,\beta}(\Omega)$ is defined as in Definition \ref{character}.

        \item If $\Omega$ is convex, $\partial\Omega$ is of class $C^2$, $f\in C^\beta([0,C_0])$ for some $\beta\in(0,1]$, and $f$ is H\"{o}lder continuous, nonnegative on $[0,\infty)$. Moreover,
        \[ x\mapsto \frac{f(x)}{x^{p-1}} \text{ is nonincreasing},\quad x\mapsto \frac{e^{(p-1)x}}{f(e^x)} \text{ is convex on $(-\infty,\log M)$},\]
        where $M=\inf\{t>0:f(t)=0\}$. Then there exist $\alpha\in(0,1]$ and $C''>0$, depending only on $N$, $p$, $\beta$, $\mathbf{c}$, $C_0$, $\lVert f\rVert_{C^\beta([0,C_0])}$, $\diam(\Omega)$, the inradius $\rho_i$ of $\Omega$ and $\mathcal{C}_2(\Omega)$, such that
        \begin{equation}\label{main est 2}
            |\Omega\Delta B_\Omega(q)| \le C'' \epsilon_0^{\frac{\alpha}{2(\alpha+1)}}
        \end{equation}
        for some $q\in\R^N$, where $\mathcal{C}_{2}(\Omega)$ is defined as in Definition \ref{character}.
        
        If, in addition, $\partial\Omega$ is of class $C^{2,\beta}$, then one may take $\alpha=1$ in \eqref{main est 2}, and the constant $C''$ additionally depends on $\mathcal{C}_{2,\beta}(\Omega)$.
    \end{enumerate}
\end{corollary}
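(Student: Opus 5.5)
The plan is to deduce Corollary \ref{maincor1} directly from Theorem \ref{mainthm}. For this we must verify hypotheses (b), (c) and (d) with constants $L,\alpha,s_*$ (and $\rho_i$) controlled by the listed quantities. Hypothesis (a) is given, and (d) follows since a convex $C^2$ domain has positive inradius, which appears among the allowed dependencies. In case (i) we aim for $\alpha=1$, which gives the exponent $\frac{1}{2\cdot 2}=\frac14$ claimed in \eqref{main}. In case (ii) we use the $\alpha$ furnished by boundary regularity, and $\alpha=1$ under $C^{2,\beta}$.

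\textbf{Step 1: gradient regularity (hypothesis (b)).} Since $|\nabla u|\ge \mathbf{c}-\epsilon_0\ge 3\mathbf{c}/4$ on $\partial\Omega$ and $u\in C^1(\bar\Omega)$, the uniform continuity of $\nabla u$ suggests a collar neighbourhood $\{0<u<s\}$ on which $|\nabla u|\ge \mathbf{c}/2$. There the equation is uniformly elliptic, with coefficients $a_{ij}(\nabla u)$ smooth in $\nabla u$. Boundary Schauder theory for the linearized/quasilinear equation, with $\partial\Omega\in C^{2,\beta}$ and $f\in C^\beta$, should give $u\in C^{2,\beta}$ near $\partial\Omega$, hence a Lipschitz gradient ($\alpha=1$) there. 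In the interior, the $C^{1,\alpha_0}$ estimates of DiBenedetto and Tolksdorf give a global H\"older bound on $\nabla u$ with constants depending on $N,p,C_0,C_1,\diam(\Omega)$ and $\mathcal{C}_{2}(\Omega)$ or $\mathcal{C}_{2,\beta}(\Omega)$. Only the exponent near $\partial\Omega$ matters, since the argument of Theorem \ref{mainthm} concerns the superlevel sets $\{u>r\}$ for small $r$. If the theorem genuinely requires a global $\alpha$, I would instead take $\alpha=\min(\alpha_0,1)$ in (ii). In (i) I would try to upgrade interior regularity, or check that the proof of Theorem \ref{mainthm} only uses (b) in the collar. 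In case (ii), with only $C^2$ boundary, Lieberman's boundary $C^{1,\alpha}$ estimate provides $\alpha$.

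\textbf{Step 2: convexity of superlevel sets (hypothesis (c)).} In case (ii) the structural conditions on $f$ ($f(x)/x^{p-1}$ nonincreasing and the log-convexity condition) are exactly those in known quasiconcavity results for $p$-Laplace equations on convex domains (Korevaar--Kennington/Sakaguchi type concavity maximum principles, and Bianchini--Salani). These yield convexity of every superlevel set, and hence of all $\{u>r\}$, $r\le s_*$, for any $s_*$ below the collar height. In case (i) no structural condition on $f$ is assumed, so only near-boundary convexity can be expected. We use the uniform convexity (curvature $\ge\kappa_0$) of $\partial\Omega$ together with the $C^{2,\beta}$ control of $u$ in the collar. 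The level sets $\{u=r\}$ are $C^{2}$-close to $\partial\Omega$ for small $r$: their second fundamental form is $-D^2u|_{T}/|\nabla u|$, which differs from that of $\partial\Omega$ by $O(r^\beta)$ in terms of the Schauder constant. Hence they stay uniformly convex for $r\le s_*$, with $s_*$ depending on $\kappa_0$, $\mathbf{c}$ and the $C^{2,\beta}$ norm.

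\textbf{Main obstacle.} Step 2 in case (i) is the hard part. It requires a quantitative $C^{2,\beta}$ estimate up to the boundary in the collar, together with an argument that a family of closed hypersurfaces with positive curvature bounds the convex sets $\{u>r\}$. For the latter, I would argue that $\{u>r\}$ is connected with a connected, locally convex boundary, and then invoke Hadamard's theorem. Once (b)–(d) hold with the stated dependencies, Theorem \ref{mainthm} gives \eqref{main} and \eqref{main est 2}.
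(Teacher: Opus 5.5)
Your proposal follows the paper's proof, step for step:
\begin{enumerate}[(1)]
\item the global $C^{1,\alpha}$ bound comes from Tolksdorf and Lieberman;
\item Schauder estimates apply in the collar $\{0\le u\le s_1\}$, where $|\nabla u|>\mathbf{c}/2$;
\item $(L,\alpha)$ is then replaced by $(c_2,1)$ inside the proofs of Theorems \ref{thm1} and \ref{mainthm}, because (b) is only used near $\partial\Omega$;
\item in (i), near-boundary convexity comes from comparing $-D^2u[\tau,\tau]/|\nabla u|$ with the curvature of $\partial\Omega$ at the nearest boundary point and applying Hadamard's theorem (the paper obtains connectedness of $\{u=s_1\}$ from the flow of $\nabla u/|\nabla u|^2$);
\item in (ii), a log-concavity theorem gives convexity of all superlevel sets.
\end{enumerate}

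Commit to the collar-only use of (b) rather than trying to upgrade interior regularity, which fails in general for $p>2$: the torsion function of a ball has $|\nabla u|=N^{-1/(p-1)}|x|^{1/(p-1)}$. Also, the result stated with exactly your hypotheses on $f$, which the paper uses, is the Borrelli--Mosconi--Squassina theorem that $\log u$ is concave (Lemma \ref{concave}), not the older concavity principles you cite.
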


The argument also extends from $-\Delta_p u$ to the operator $-\Div \big(g(|\nabla u|)\frac{\nabla u}{|\nabla u|}\big)$, where $g:[0,\infty)\to \R$ is strictly increasing, and yields a similar stability estimate under suitable structural assumptions.

\begin{corollary}\label{Cor2}
    Let $g\in C([0,\infty)) \cap C^1((0,\infty))$, $g(0)=0$, $g(x)>0$ for $x>0$ and
    \begin{align}
        0< \lambda_g \le \frac{g'(x)x}{g(x)} \le \Lambda_g <\infty,&\quad \forall\, x>0;  \label{eigen} \\
        x\mapsto \frac{xg'(x)}{g(x)} \in C_{\text{loc}}^{0,\theta}((0,\infty)),&\quad \text{for some $\theta\in(0,1]$}. \label{regular g}
    \end{align}
    Suppose $\Omega$ is uniformly convex, $\partial\Omega$ is of class $C^{2,\beta}$, and $f\in C^\beta(\R)$ is nonnegative. If there exists $u\in C^1(\bar\Omega)$ such that
    \begin{equation}\label{g ver eq}
        \begin{cases}
            -\Div \big(g(|\nabla u|)\frac{\nabla u}{|\nabla u|}\big) = f(u) & \text{in $\Omega$},\\
            u = 0 & \text{on $\partial\Omega$},
        \end{cases}
    \end{equation}
    and $\epsilon_0 :=\lVert |\nabla u|-\mathbf{c}\rVert_{L^\infty(\partial\Omega)} < \min\{1,\mathbf{c}/4\}$, then there exists a constant $C_g$ depending only on $N$, $\mathbf{c}$, $\theta$, $\beta$, $g$, $\kappa_0$, $\lVert f\rVert_{C^\beta(\R)}$, $\lVert u\rVert_{C^1(\bar\Omega)}$, $\diam(\Omega)$, $\rho_i$ and $\mathcal{C}_{2,\beta}(\Omega)$ such that
    \begin{equation}
        |\Omega \Delta B_\Omega(q)| \le C_g \epsilon_0^{1/4}.
    \end{equation}
    Here $\mathcal{C}_{2,\beta}(\Omega)$ is defined as in Definition \ref{character}.
\end{corollary}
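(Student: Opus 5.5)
Corollary \ref{Cor2} will be proved by running the argument behind Theorem \ref{mainthm} with the $p$-Laplacian replaced by the operator $-\Div\big(g(|\nabla u|)\frac{\nabla u}{|\nabla u|}\big)$, and then checking hypotheses (b)--(d) with $\alpha=1$ as in part (i) of Corollary \ref{maincor1}. With $\alpha=1$ the exponent $\frac{\alpha}{2(\alpha+1)}$ equals $1/4$, which is the claimed rate. The proof of Theorem \ref{mainthm} uses the specific form $|\nabla u|^{p-2}\nabla u$ in two places only: the comparison/rearrangement step (continuous Steiner symmetrization along the family of superlevel sets, in the spirit of Brock--Henrot, with the boundary defect $\epsilon_0$ entering through a first variation) and the conversion of $|\nabla u|\approx\mathbf{c}$ on $\partial\Omega$ into control of the flux. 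Both steps depend only on the functional $\int G(|\nabla v|)-F(v)$ with $G'=g$ being convex, strictly monotone in $|\nabla v|$, and invariant under rearrangement. By \eqref{eigen}, $G$ satisfies the $\Delta_2$-type bounds $x^{1+\lambda_g}\lesssim G(x)\lesssim x^{1+\Lambda_g}$ near any compact range of $|\nabla u|$. So I would repeat the proof verbatim, replacing each power $t^{p-1}$ by $g(t)$ and each use of $p$ by $\lambda_g,\Lambda_g$. The strict monotonicity of $g$ on $[\mathbf{c}/2,2\mathbf{c}]$, which contains the boundary gradient range because $\epsilon_0<\mathbf{c}/4$, turns $|g(|\nabla u|)-g(\mathbf{c})|$ into a quantity comparable to $\epsilon_0$, with constants depending on $g$ and $\mathbf{c}$.

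Next I would verify the hypotheses of Theorem \ref{mainthm}. Hypothesis (a) is given. Hypothesis (d) is immediate, since a uniformly convex $C^{2,\beta}$ domain has $\rho_i>0$. For (b) with $\alpha=1$, I would use Lieberman's boundary regularity for operators of Uhlenbeck type $\Div(a(|\nabla u|)\nabla u)$. Condition \eqref{eigen} is exactly Lieberman's structure condition, and \eqref{regular g} together with $f\in C^\beta$ and $\partial\Omega\in C^{2,\beta}$ gives $u\in C^{1,\gamma}(\bar\Omega)$. On the other hand, Hopf's lemma (the Vázquez-type strong maximum principle for such $g$), or simply the hypothesis $|\nabla u|\ge \mathbf{c}-\epsilon_0\ge 3\mathbf{c}/4$ on $\partial\Omega$ together with the $C^{1,\gamma}$ bound, shows that $|\nabla u|\ge \mathbf{c}/2$ in a collar $\{d(x,\partial\Omega)<\delta\}$. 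Here $\delta$ is controlled by $\mathbf{c}$ and the $C^{1,\gamma}$ norm. In this collar the equation is uniformly elliptic, with coefficients $a_{ij}=\frac{g(|\nabla u|)}{|\nabla u|}\big(\delta_{ij}+(\frac{|\nabla u|g'}{g}-1)\nu_i\nu_j\big)$ that are Hölder by \eqref{regular g}. Schauder theory then gives $u\in C^{2,\min(\beta,\theta\gamma)}$ up to $\partial\Omega$. Away from the collar we only need $C^{1,\alpha}$, so I would instead obtain Lipschitz $\nabla u$ with $\alpha=1$ from interior estimates: the Hessian is bounded wherever $|\nabla u|$ is bounded below, and on the critical set I would use that only the near-boundary region matters for (c).

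This points to the real issue, which is (c): the superlevel sets $\{u>r\}$ must be convex for $0\le r\le s_*$. I would prove it as in Corollary \ref{maincor1}(i). On $\partial\Omega$ the level set curvature is the curvature of $\partial\Omega$, hence at least $\kappa_0$. For $x$ in the collar, the curvature of the level set through $x$ is a continuous expression in $D^2u/|\nabla u|$, and $D^2u$ is $C^{0,\beta'}$ with norm depending on $\mathcal{C}_{2,\beta}(\Omega)$, $g$, and $\lVert f\rVert_{C^\beta}$. So these curvatures stay at least $\kappa_0/2$ within a distance $\delta'$ depending only on the listed constants. Choosing $s_*=\min_{d(x,\partial\Omega)=\delta'}u$, bounded below via the Hopf bound, makes all these superlevel sets convex. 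The main obstacle, I expect, is making the global hypothesis (b) quantitative with $\alpha=1$ when $\nabla u$ may vanish inside $\Omega$. My first attempt would be to check that the proof of Theorem \ref{mainthm} uses (b) only in the region $\{u<s_*\}$, where $C^{2,\beta'}$ holds, and hence to localize hypothesis (b). Failing that, I would accept a Hölder exponent $\alpha<1$ in the interior, at the cost of an exponent worse than $1/4$.
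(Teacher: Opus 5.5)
Your plan is essentially the paper's proof. It uses $C^{1,\alpha}$ regularity from Lieberman, a collar where $|\nabla u|\in[\mathbf{c}/2,3\mathbf{c}/2]$ and the equation is uniformly elliptic, and Schauder $C^{2,\eta}$ bounds in that collar. It then gets convexity of $\{u>s_1\}$ for small $s_1$ from the curvature-plus-Hadamard argument of Lemma \ref{Lem convex}, and reruns Lemma \ref{good convex}, Proposition \ref{Prop 1} and Theorem \ref{thm1} with $G$ in place of $t^p$. One correction to your replacement rule: the level-set energy density becomes $G(t)/t$, which is increasing; $g(t)$ replaces $t^{p-1}$ only in the flux bounds. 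Your ``first attempt'' at localizing (b) is exactly what the paper does, and it works: (b) enters only at points of $\{u<s\}$ and along normal segments to $\partial\Omega$, so one may take $(L,\alpha)=(c_2,1)$ and obtain $\epsilon_0^{1/4}$. The fallback to $\alpha<1$ is therefore unnecessary. Drop the suggested global Lipschitz bound on $\nabla u$, since it is not available near critical points of this degenerate operator.
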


To the best of our knowledge, and within the class of estimates controlled directly by the boundary oscillation of $|\nabla u|$, the present result provides the first quantitative stability estimate for the $p$-Laplacian Serrin problem in terms of the deviation of the boundary gradient from a constant.

Our main tool is Steiner symmetrization. Inspired by the continuous Steiner symmetrization method in \cite{Cao2025, Brock2000}, we first convert the boundary defect $\epsilon_0$ into a quantitative deficit in the $p$-Dirichlet energy. After truncating the solution near the boundary, following an idea from \cite{Fusco2009}, we compare the energy deficit with the decrease in the perimeters of the superlevel sets. We then apply the quantitative stability of Steiner symmetrization for convex sets to obtain approximate reflection symmetry in every direction. A final geometric argument converts these directional estimates into a bound for the Fraenkel asymmetry of $\Omega$. The same mechanism applies to more general divergence-form operators under suitable regularity and structural assumptions.

The paper is organized as follows. Section \ref{sec 2} collects a maximum principle and related properties of the solution, and reviews Steiner symmetrization. In Section \ref{sec3}, we prove Proposition \ref{Prop 1} and use it to establish Theorem \ref{mainthm}. Section \ref{sec4} proves Corollary \ref{maincor1} and extends the method to a class of divergence-form elliptic operators.

\section{Preliminaries}\label{sec 2}

Throughout the paper, $\nu(x)$ denotes the inward unit normal to $\partial\Omega$ at $x\in\partial\Omega$.
We denote by $w_N$ the volume of the unit ball in $\R^N$.

\subsection{Properties of the solution}
\begin{lemma}\label{MP}
    Suppose $D$ is a bounded domain in $\R^N$. If $v$ satisfies $-\Delta_p v \le 0$ in $D$, and $v\le M$ on $\partial D$ for some $M\ge 0$, then $v\le M$ in $D$.
\end{lemma}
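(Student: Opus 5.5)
We prove Lemma \ref{MP} by testing the weak form of the inequality against the positive part of $v-M$, the standard weak comparison argument for the $p$-Laplacian. We interpret the hypotheses in the natural weak sense: $v\in W^{1,p}_{\mathrm{loc}}(D)$ (in our applications $v\in C^1$), and
\[
\int_D |\nabla v|^{p-2}\nabla v\cdot\nabla\varphi\,dx\le 0\qquad\text{for all } 0\le\varphi\in W^{1,p}_0(D).
\]
The boundary condition $v\le M$ on $\partial D$ means $(v-M)^+\in W^{1,p}_0(D)$. When $v$ is continuous up to $\bar D$ this holds automatically, since for every $\delta>0$ the function $(v-M-\delta)^+$ has compact support in $D$.

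First, take $\varphi=(v-M-\delta)^+$ with $\delta>0$; this is a nonnegative admissible test function. On its support we have $\nabla\varphi=\nabla v$, and $\nabla\varphi=0$ a.e. elsewhere. The inequality therefore gives
\[
\int_{\{v>M+\delta\}}|\nabla v|^{p}\,dx\le 0 .
\]
Hence $\nabla (v-M-\delta)^+=0$ a.e. in $D$.

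Next, the function $(v-M-\delta)^+$ lies in $W^{1,p}_0(D)$ and has zero gradient. By the Poincaré inequality on the bounded domain $D$ (or since a $W^{1,p}_0$ function with vanishing gradient is zero), it vanishes a.e. Thus $v\le M+\delta$ a.e. in $D$. Letting $\delta\to0$ yields $v\le M$, and everywhere in $D$ if $v$ is continuous.

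The only delicate point is the admissibility of the test function, that is, verifying that $(v-M-\delta)^+\in W^{1,p}_0(D)$ from the boundary condition. The $\delta$-shift is introduced precisely to handle this when $v$ is only continuous on $\bar D$, because it makes the support compact in $D$. The assumption $M\ge0$ is not actually needed, since the $p$-Laplacian is invariant under adding constants; it is presumably stated for convenience in later applications.
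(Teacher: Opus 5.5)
Your proof is correct. The paper gives no argument of its own for Lemma~\ref{MP}; it simply invokes the general weak maximum principle of Pucci and Serrin \cite[Theorem 3.2.1]{Pucci2007} for divergence-structure operators.

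You instead argue directly, and testing with $(v-M-\delta)^+$ is the mechanism behind such results. Because the comparison function here is a constant, the monotonicity step collapses to $\int_{\{v>M+\delta\}}|\nabla v|^p\,dx\le 0$, and the Poincar\'e inequality in $W^{1,p}_0(D)$ finishes.

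What the citation buys is generality: lower-order terms, general structure conditions and minimal regularity. That generality is presumably also where the restriction $M\ge 0$ in the statement comes from. What your version buys is the following:
\begin{itemize}
\item It is self-contained, and makes explicit both the weak formulation and how continuity up to $\bar D$ enters (the $\delta$-shift gives compact support in $D$).
\item It confirms that $M\ge 0$ is superfluous; in the paper's two applications $M=0$ anyway.
\item It never uses connectedness of $D$.
\item It transfers verbatim to the operator of Corollary~\ref{Cor2}. There the same test function gives $\int_{\{v>M+\delta\}} g(|\nabla v|)\,|\nabla v|\,dx\le 0$, and $g>0$ on $(0,\infty)$ forces $\nabla v=0$ a.e.\ on that set.
\end{itemize}
The last point is exactly the maximum principle the paper tacitly relies on when it asserts that conclusion (2) of Corollary~\ref{Cor1} holds for solutions of \eqref{g ver eq}.
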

This follows directly from Pucci and Serrin \cite[Theorem 3.2.1]{Pucci2007}.

For $\delta>0$, denote by $\Omega_{\delta}=\{x\in\Omega: d(x,\partial\Omega)\ge \delta\}$.

\begin{corollary}\label{Cor1}
    Suppose $u$ is a solution of \eqref{Eq1} and satisfies conditions (a), (b), and (d) in Theorem \ref{mainthm} with $\epsilon_0<\mathbf{c}/2$. Set $C_0:=\lVert u\rVert_{C^1(\bar\Omega)}$ and $C_1:=\lVert f\rVert_{C^0([0,C_0])}$. Then for any $\delta < \bigl(\frac{\mathbf{c}/2-\epsilon_0}{L}\bigr)^{1/\alpha}$, we have
    \begin{enumerate}[(1)]
        \item $|\nabla u|>\mathbf{c}/2$ for all $x\in \Omega\backslash \Omega_{\delta}$.
        \item For $x\in \Omega\backslash \Omega_{\delta}$, $u(x) \ge \frac{\mathbf{c}}{2}d(x,\partial\Omega)$ and $u\ge \delta \mathbf{c}/2$ in $\Omega_{\delta}$.
        \item There exists a constant $C_2=C_1|\Omega|>0$ such that
        \begin{equation}
            \int_{\{u=\gamma\}} |\nabla u|^{p-1}\,d\mathcal{H}^{N-1}(x) \le C_2,
        \end{equation}
        for all $\gamma\in (0,\delta\mathbf{c}/2)$. Here $\mathcal{H}^{N-1}$ denotes the $(N-1)$-dimensional Hausdorff measure.
    \end{enumerate}
\end{corollary}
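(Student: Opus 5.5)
Part (1) is a direct consequence of the H\"older bound (b) combined with the boundary condition in (a). Fix $x\in\Omega\setminus\Omega_\delta$, so $d(x,\partial\Omega)<\delta$. Since $\partial\Omega$ is compact, there is $y\in\partial\Omega$ with $|x-y|=d(x,\partial\Omega)<\delta$. Then
\[
|\nabla u(x)| \ge |\nabla u(y)| - L|x-y|^\alpha > \mathbf{c}-\epsilon_0 - L\delta^\alpha > \mathbf{c}-\epsilon_0-(\mathbf{c}/2-\epsilon_0) = \mathbf{c}/2,
\]
using $\delta^\alpha<(\mathbf{c}/2-\epsilon_0)/L$.

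For part (2), take $x$ with $t:=d(x,\partial\Omega)<\delta$ and a nearest point $y\in\partial\Omega$. The open segment from $y$ to $x$ lies in $\Omega\setminus\Omega_\delta$, since every point on it has distance $<\delta$ to $\partial\Omega$. By the $C^1$ boundary, $u=0$ there. Because $u>0$ in $\Omega$ (via $f\ge0$ and the maximum principle, Lemma \ref{MP} applied to $-u$), the gradient at $y$ is $\nabla u(y)=|\nabla u(y)|\nu(y)$, and $(x-y)/t=\nu(y)$ since $y$ is a nearest point. Along $z(s)=y+s\nu(y)$ we then have
\[
\nabla u(z(s))\cdot\nu(y) \ge |\nabla u(y)| - Ls^\alpha > \mathbf{c}/2 .
\]
Integrating from $0$ to $t$ gives $u(x)\ge \mathbf{c}t/2$.

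For $x\in\Omega_\delta$, the bound $u\ge\delta\mathbf{c}/2$ on $\partial\Omega_\delta$ follows from the previous estimate by continuity as $d\to\delta^-$. Since $-\Delta_p u=f(u)\ge0$, the function $v=\delta\mathbf{c}/2-u$ satisfies $-\Delta_p v\le 0$. Applying Lemma \ref{MP} on each component of the interior of $\Omega_\delta$ with $M=0$ (after a shift; the lemma is stated for $M\ge0$ and $v\le0$ on the boundary) yields $u\ge\delta\mathbf{c}/2$ in $\Omega_\delta$. I will want to check that $\partial(\mathrm{int}\,\Omega_\delta)\subset\{d=\delta\}$, which holds.

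For part (3), fix $\gamma\in(0,\delta\mathbf{c}/2)$. By (2), $\{u\ge\gamma\}\subset\Omega$ and the level set $\{u=\gamma\}$ lies in $\Omega\setminus\Omega_\delta$, where $|\nabla u|>\mathbf{c}/2>0$. Hence it is a $C^1$ hypersurface bounding $D_\gamma=\{u>\gamma\}$, with inward normal $\nabla u/|\nabla u|$. Testing the weak equation with $\varphi=(u-\gamma)_+\in W^{1,p}_0(\Omega)$ directly is awkward for recovering the flux, so instead I use the divergence theorem in its weak form: take cutoffs $\varphi_k=\min\{1,k(u-\gamma)_+\}$. This gives
\[
\int_\Omega f(u)\varphi_k = k\int_{\{\gamma<u<\gamma+1/k\}}|\nabla u|^p .
\]
By the coarea formula and the continuity of $\gamma\mapsto\int_{\{u=\gamma\}}|\nabla u|^{p-1}$ (valid since $\nabla u\ne0$ and $\nabla u$ is continuous near the level set), the right side tends to $\int_{\{u=\gamma\}}|\nabla u|^{p-1}d\mathcal{H}^{N-1}$. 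The left side is bounded by $C_1|\Omega|$, since $0\le u\le C_0$ and $0\le f(u)\le C_1$.

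The main (mild) obstacle is justifying this limit passage with only $C^1$ regularity. I handle it via the coarea formula and the fact that $\{u=\gamma'\}$ moves continuously near $\{u=\gamma\}$ because of the nonvanishing gradient. Alternatively, it suffices to obtain the bound for a.e. $\gamma$ and conclude by continuity.
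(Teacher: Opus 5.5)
Your proof is correct and follows the paper's argument. For (1) you use the H\"older bound at a nearest boundary point. For (2) you integrate along the inward normal segment, which is equivalent to the paper's mean value step, and then compare $v=\delta\mathbf{c}/2-u$ on $\Omega_\delta$. For (3) you use the flux identity over $\{u>\gamma\}$; your test functions $\min\{1,k(u-\gamma)_+\}$ together with the coarea formula make rigorous the paper's direct appeal to the divergence theorem for the merely continuous field $|\nabla u|^{p-2}\nabla u$.

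One small correction: Lemma~\ref{MP} applied to $-u$ gives only $u\ge 0$ (the paper adds the strong maximum principle). However, $u\ge 0$ is all you need to get $\nabla u(y)=|\nabla u(y)|\nu(y)$, and strict positivity then follows from (2) itself.
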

\begin{proof}
    Since $u=0$ on $\partial\Omega$ and $-\Delta_p u=f(u)\ge 0$ in $\Omega$, Lemma \ref{MP} and the strong maximum principle imply that $u>0$ in $\Omega$.

    For $x\in\Omega$, choose $y\in\partial\Omega$ such that $|x-y|=d(x,\partial\Omega)$. Then $\frac{x-y}{|x-y|} = \nu(y)$. By condition (b),
    \[ |\nabla u(x)| \ge |\nabla u(y)| - |\nabla u(x) - \nabla u(y)| \ge \mathbf{c}- \epsilon_0 - L|x-y|^\alpha \ge \mathbf{c}/2 \]
    provided $d(x,\partial\Omega) \le \bigl(\frac{\mathbf{c}/2-\epsilon_0}{L}\bigr)^{1/\alpha}$, and this proves (1).
    
    By the mean value theorem, there is $\theta\in(0,1)$ such that
    \[ u(x) = u(y) + \nabla u(z) \cdot (x-y),\quad z=\theta x+(1-\theta)y.\]
    Again by condition (b), 
    \[ \Big|\nabla u(z)\cdot \frac{x-y}{|x-y|} - \nabla u(y)\cdot \nu(y)\Big| \le |\nabla u(z)-\nabla u(y)| \le L|z-y|^\alpha \le L|x-y|^\alpha.\]
    Since $u=0$ on $\partial\Omega$ and $u>0$ in $\Omega$, we know $\nabla u(y)\cdot\nu(y) =|\nabla u(y)| \ge \mathbf{c}-\epsilon_0$, and so
    \begin{align*}
        u(x) = \nabla u(z)\cdot (x-y) \ge{}& |x-y|\nabla u(y)\cdot\nu(y) - L|x-y|^{\alpha+1}\\
        \ge{}& (\mathbf{c}-\epsilon_0 - L|x-y|^{\alpha})|x-y|\\
        ={}& (\mathbf{c}-\epsilon_0 - Ld(x,\partial\Omega)^{\alpha})d(x,\partial\Omega).
    \end{align*}
    Hence for $x\in\Omega\backslash \Omega_{\delta}$, we have $u(x) \ge \frac{\mathbf{c}}2 d(x,\partial\Omega)$. Let $v=\delta\mathbf{c}/2-u$, then
    \[ \begin{cases}
        -\Delta_p v = -f(u)\le 0 & \text{in $\Omega_{\delta}$},\\
        v\le 0 & \text{on $\partial\Omega_{\delta}$}.
    \end{cases}\] 
    By Lemma \ref{MP}, $v\le 0$ in $\Omega_{\delta}$; hence $u\ge \delta\mathbf{c}/2$ in $\Omega_{\delta}$. This proves (2).

    Since $|\nabla u|$ does not vanish in $\{0<u<\delta\mathbf{c}/2\}$, by the implicit function theorem, $\{u>\gamma\}$ is an open set of $\Omega$ with $\partial\{u>\gamma\}=\{u=\gamma\}$ for every $\gamma\in(0,\delta\mathbf{c}/2)$, and $\{u=\gamma\}$ is locally a $C^1$-hypersurface. Integrating \eqref{Eq1} over $\{u>\gamma\}$ and using the divergence theorem, we have
    \[ \int_{\{u=\gamma\}} |\nabla u|^{p-1}\,d\mathcal{H}^{N-1}(x) = \int_{\{u>\gamma\}} f(u)\,dx \le C_1|\Omega|.\]
    This proves (3).
 \end{proof}

\subsection{Steiner symmetrization}
After a translation, we may assume throughout the following sections that $0\in\Omega$; consequently, $\Omega\subset B_{\diam(\Omega)}(0)$.

We introduce the continuous Steiner symmetrization for completeness. All the properties can be found in Brock \cite[Section 2]{Brock2000}. By $\mathcal{M}(\R^N)$, we denote the family of Lebesgue measurable sets in $\R^N$ with finite measure. For a function $u:\R^N\to \R$, let $\{u>a\}$ denote the set $\{x\in\R^N: u(x)>a\}$ ($a\in \R$). Let $\mathcal{S}(\R^N)$ be the set of real-valued, nonnegative measurable functions $u$ that satisfies $|\{u>c\}|<\infty$ for all $c>0$.

\begin{definition}[Continuous Steiner symmetrization of sets in $\mathcal{M}(\R)$]
    Assume
    \[ \mathcal{T}_t: \mathcal{M}(\R)\to \mathcal{M}(\R), \quad 0\le t\le \infty\]
    is a family of set transformations. We call it a continuous symmetrization on $\R$ if it satisfies the following properties:
    \begin{enumerate}
        \item equimeasurability: for $M\in \mathcal{M}(\R)$, $|\mathcal{T}_t(M)| = |M|$,
        \item monotonicity: if $M\subset E$ are sets in $\mathcal{M}(\R)$, then $\mathcal{T}_t(M)\subset \mathcal{T}_t(E)$,
        \item semigroup property: $\mathcal{T}_t (\mathcal{T}_s (M)) = \mathcal{T}_{s+t}(M)$,
        \item interval evolution: if $M=[x-R,x+R]$, then $\mathcal{T}_t(M)=[xe^{-t}-R,xe^{-t}+R]$.
    \end{enumerate}
\end{definition}
It can be verified that $M^0 = M$ and $M^\infty$ is the Steiner symmetrization of $M$. Moreover, $M^t$ is open (resp. closed) if $M$ is open (resp. closed).

For sets in $\R^N$, we can define continuous Steiner symmetrization with respect to a given direction $\tau\in\R^N$. In the following, we assume $\tau = e_N$.

\begin{definition}[Continuous Steiner symmetrization of sets in $\mathcal{M}(\R^N)$]
    Let $M\in \mathcal{M}(\R^N)$, $N\ge 2$. Then the family of sets
    \[ M^t := \{x=(x',y): y\in (M(x'))^t, \, x'\in\R^{N-1}\},\quad t\in[0,\infty]\]
    is called the continuous Steiner symmetrization of $M$, where $M(x')=\{y\in\R: (x',y)\in M\}$.
\end{definition}

\begin{definition}[Continuous Steiner symmetrization of functions]
    Let $u\in \mathcal{S}(\R^N)$ ($N\ge 2$). Then
    \[ u^t(x) := \begin{cases}
         \sup\{ c>\inf u: x\in \{u>c\}^t \}, & \text{if $x\in \bigcup\limits_{c>\inf u} \{u>c\}^t$ } \\
         \inf u, & \text{otherwise}
    \end{cases},\quad x\in\R^N\]
    is called the continuous Steiner symmetrization of $u$ with respect to $e_N$.
\end{definition}

We state some properties that are needed in the subsequent proofs.

\begin{proposition}\label{Steiner prop}
    Let $M\in\mathcal{M}(\R^N)$, $u,v\in\mathcal{S}(\R^N)$, and $t\in[0,\infty]$. Then
    \begin{enumerate}[(1)]
        \item equimeasurability: $|M| = |M^t|$ and $\{u^t>c\} = \{u>c\}^t$, $\forall\, c>0$.
        \item monotonicity: if $u\le v$, then $u^t\le v^t$.
        \item Cavalieri's principle: if $F$ is Borel measurable and $F(u)\in L^1(\R^N)$, then
        \[ \int_{\R^N} F(u) = \int_{\R^N} F(u^t).\]
        \item continuity with respect to the parameter $t$: if $t_m\to t$ as $m\to \infty$, then $M^{t_m}\to M^t$ in measure. If in addition $u$ is a.e. finite, then $u^{t_m}\to u^t$ in measure.
        \item If $u$ is Lipschitz continuous with Lipschitz constant $L$, then $u^t$ is Lipschitz continuous with Lipschitz constant less than $L$.
        \item If $\supp u \subset  B_R$ for some $R>0$, then $\supp u^t\subset B_R$ as well. If in addition $u$ is Lipschitz continuous with Lipschitz constant $L$, then
        \[ |u(x)-u^t(x)| \le LRt,\quad \forall\, x\in B_R.\]
        Furthermore,
        \[ \int_{B_R} G(|\nabla u^t|)\,dx \le \int_{B_R} G(|\nabla u|)\,dx\]
        for every convex function $G:[0,\infty)\to [0,\infty)$ with $G(0)=0$.
    \end{enumerate}
\end{proposition}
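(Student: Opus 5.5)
The statement to be proved is Proposition \ref{Steiner prop}. Since the paper says all these properties can be found in Brock \cite[Section 2]{Brock2000}, the plan is to reduce each item to the one-dimensional continuous symmetrization $\mathcal{T}_t$ and its four defining axioms, arguing fiberwise in the variable $y$ along $e_N$ and reassembling with Fubini.

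\textbf{Items (1)--(3).} For (1), equimeasurability of $M^t$ follows from $|M(x')^t|=|M(x')|$ and Fubini. For the level sets, I will first show $\{u>c\}=\bigcup_{c'>c}\{u>c'\}$. Then monotonicity of $\mathcal{T}_t$, together with its continuity along increasing unions (which follows from equimeasurability plus monotonicity), gives $\{u^t>c\}=\{u>c\}^t$ directly from the definition of $u^t$ as a supremum. Item (2) follows because $u\le v$ gives $\{u>c\}\subset\{v>c\}$, hence $\{u>c\}^t\subset\{v>c\}^t$, and the supremum in the definition is monotone. Item (3) is the layer-cake formula: the distribution functions of $u$ and $u^t$ coincide by (1), so $F(u)$ and $F(u^t)$ are equidistributed, first for $F$ an indicator of $(a,\infty)$ and then for general Borel $F$ by a monotone class argument.

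\textbf{Item (4).} I will reduce continuity in $t$ to intervals. For an interval, $\mathcal{T}_t$ is the explicit translation of axiom 4, so continuity in measure is clear. For a finite union of intervals, the semigroup property together with the merging behaviour of intervals implied by axioms 2 and 4 gives a piecewise-explicit motion that is continuous in $t$. A general $M$ is approximated in measure by finite unions of intervals $U$ with $|M\Delta U|<\eta$. Here I will use the $L^1$-contraction $|\mathcal{T}_t(M)\Delta\mathcal{T}_t(U)|\le|M\Delta U|$, which follows from monotonicity and equimeasurability applied to $M\cup U$ and $M\cap U$. Fubini then passes the result to $\R^N$, and level sets pass it to functions.

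\textbf{Item (5).} A function is $L$-Lipschitz iff $\{u>c+L\,r\}+B_r\subset\{u>c\}$ for all $c$ and $r>0$. In this form, preservation under the symmetrization reduces to showing that $\mathcal{T}_t$ commutes suitably with $r$-neighbourhoods. I will verify this first in each direction separately: horizontal displacement between fibers, and vertical neighbourhoods within a fiber. The vertical part is clear on intervals and extends to general sets by approximation.

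\textbf{Item (6).} The support claim follows since $B_R$ is Steiner symmetric and hence invariant under $\mathcal{T}_t$ fiberwise, and monotonicity then applies. For $|u-u^t|\le LRt$, I will compare level sets. A point of $\{u>c\}$ moves at speed at most $|y|\le R$ under the exponential contraction of axiom 4, so $\{u>c+LRt\}\subset\{u>c\}^t$ and symmetrically. The energy inequality $\int G(|\nabla u^t|)\le\int G(|\nabla u|)$ is the main obstacle. The plan is to first prove it for polarizations (two-point rearrangements), where it is an elementary pointwise identity. Continuous Steiner symmetrization is then approximated by iterated polarizations in $L^\infty$, and lower semicontinuity of the convex functional $\int G(|\nabla\cdot|)$ under uniform convergence with bounded Lipschitz constants (from (5)) concludes the argument. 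In practice I would cite Brock for this approximation step rather than reprove it.
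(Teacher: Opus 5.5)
The paper gives no proof of this proposition; it simply refers to Brock for all six items. Your reconstruction of (1)--(5) and of the first two claims in (6) is broadly sound. The route you propose for the energy inequality in (6), however, has a genuine gap.

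\textbf{The energy inequality for intermediate $t$.} The paper needs $\int G(|\nabla u^t|)\le\int G(|\nabla u|)$ for every $t\in[0,\infty]$. For instance, the monotonicity of $\phi$ in Lemma \ref{good convex} is exactly this statement for $0<t<\infty$. You obtain it by approximating $u^t$ with iterated polarizations and citing Brock, but no such approximation is available. The polarization approximation theorem of Brock and Solynin produces the \emph{full} Steiner symmetrization $u^\infty$ (and Schwarz symmetrization) as a limit of polarizations, not the intermediate stages $u^t$.

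This is not a formality. A polarization in a hyperplane $\{y=a\}$ acts on each level interval of a fibre in one of two ways: it leaves it fixed, or it reflects its centre by $c\mapsto 2a-c$, reversing the order of all intervals centred beyond $a$. By contrast, $u^t$ moves every level interval by the order-preserving contraction $c\mapsto ce^{-t}$ of its centre, with components merging when they meet. Whether compositions of the former can uniformly approximate the latter is a nontrivial question that your plan does not address.

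So for $0<t<\infty$ the inequality remains unproved; your argument does work for $t=\infty$. To close the gap, either:
\begin{itemize}
\item cite Brock for the inequality itself, as the paper does; or
\item prove it directly on a dense class, for example functions whose fibre level sets are finite unions of intervals with smooth endpoint graphs. Write the energy through those graphs as in Lemma \ref{good convex}, control what happens when components merge, and then pass to general Lipschitz $u$ using your lower semicontinuity argument together with (5).
\end{itemize}

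\textbf{Two smaller points.}

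In (1), equimeasurability and monotonicity only give $\bigcup_{c'>c}\{u>c'\}^t\subset\{u>c\}^t$ with equal measures. That is equality up to a null set. The exact identity, which you later use to transfer inclusions in (5) and (6), needs $\mathcal{T}_t$ to commute with increasing unions, as it does on open sets in Brock's construction.

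In (5), checking horizontal and vertical displacements separately and combining them by the triangle inequality only gives the constant $\sqrt{2}\,L$. What you need is the one-dimensional statement that $A+(-\rho,\rho)\subset B$ implies $\mathcal{T}_t(A)+(-\rho,\rho)\subset\mathcal{T}_t(B)$. Apply it to the fibres $A=\{u>c+Lr\}(x')$ and $B=\{u>c\}(z')$ with $\rho=\sqrt{r^2-|x'-z'|^2}$. For finite unions of intervals this follows by restarting at each merging time of $A$. Between merging times each component $C$ satisfies $\mathcal{T}_\tau(C)+(-\rho,\rho)=\mathcal{T}_\tau(C+(-\rho,\rho))$, by the interval property. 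Monotonicity then gives $\mathcal{T}_\tau(C+(-\rho,\rho))\subset\mathcal{T}_\tau(B)$. Approximation in measure alone would not preserve the inclusion.
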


\begin{lemma}\label{commute}
    For $u\in\mathcal{S}(\R^N)$ and $s\in\R$, let $u_s := \min\{u,s\}$ and $(u-s)_+ := \max\{u-s,0\}$. Then
    \[ (u_s)^t=(u^t)_s,\quad (u-s)_+^t = (u^t-s)_+\]
    for all $t\in[0,\infty]$.

    Consequently, for $0<\gamma<s$, define $u_{\gamma,s}:=\min\{(u-\gamma)_+,s-\gamma\} = (u-\gamma)_+-(u-s)_+$. Then $(u_{\gamma,s})^t = (u^t)_{\gamma,s}$.
\end{lemma}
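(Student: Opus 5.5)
The plan is to reduce both identities to the level-set characterization of continuous Steiner symmetrization in Proposition \ref{Steiner prop}(1), namely $\{u^t>c\}=\{u>c\}^t$ for $c>0$. Two functions in $\mathcal{S}(\R^N)$ with the same superlevel sets for all levels coincide pointwise, since $w(x)=\sup\{c: x\in\{w>c\}\}$. It therefore suffices to compare superlevel sets level by level. The only delicate point is how the definition of $u^t$ treats $\inf u$; because every function here is nonnegative with $\inf=0$ (or the set where it is positive has finite measure), the ``otherwise'' branch gives the value $0$ consistently.

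For the truncation $u_s=\min\{u,s\}$ (with $s>0$; the case $s\le 0$ is trivial, both sides being constant), fix $c>0$. If $c<s$, then $\{u_s>c\}=\{u>c\}$, so
\[
\{(u_s)^t>c\}=\{u>c\}^t=\{u^t>c\}=\{(u^t)_s>c\}.
\]
If $c\ge s$, both sets are empty, because $\emptyset^t=\emptyset$ by equimeasurability. Hence $(u_s)^t=(u^t)_s$.

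For $(u-s)_+$, note $\{(u-s)_+>c\}=\{u>c+s\}$ for $c>0$. Thus
\[
\{(u-s)_+^t>c\}=\{u>c+s\}^t=\{u^t>c+s\}=\{(u^t-s)_+>c\},
\]
using Proposition \ref{Steiner prop}(1) at level $c+s>0$ when $s\ge0$. For $s<0$ one has $(u-s)_+=u-s$, which is not in $\mathcal{S}$ unless interpreted suitably, so I would restrict to $s\ge0$, as is needed later. Identical superlevel sets then give $(u-s)_+^t=(u^t-s)_+$.

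For the consequence, I would not split the difference $(u-\gamma)_+-(u-s)_+$, since symmetrization is not linear. Instead I use the composite form $u_{\gamma,s}=\min\{(u-\gamma)_+,s-\gamma\}$ and apply the two identities in turn. Set $w=(u-\gamma)_+\in\mathcal{S}$. Then
\[
(u_{\gamma,s})^t=(w_{s-\gamma})^t=(w^t)_{s-\gamma}=\bigl((u^t-\gamma)_+\bigr)_{s-\gamma}=(u^t)_{\gamma,s}.
\]
The main care point is verifying that the pointwise reconstruction from level sets matches the paper's definition of $u^t$ at points outside $\bigcup_c\{u>c\}^t$, which holds because all the functions involved vanish there.
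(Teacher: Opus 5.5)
Your proof is correct and is essentially the paper's argument. The paper likewise identifies $\{u_s>c\}$ with $\{u>c\}$ below level $s$ and $\{(u-s)_+>c\}$ with $\{u>c+s\}$, and reads off $(u_s)^t$ and $(u-s)_+^t$ directly from the sup-definition (where you pass through Proposition \ref{Steiner prop}(1) and pointwise reconstruction); it obtains the consequence exactly as you do, by truncating $w=(u-\gamma)_+$ at height $s-\gamma$.

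Two small remarks. First, $\emptyset^t=\emptyset$ follows from monotonicity together with interval evolution, not from equimeasurability alone, which only gives $|\emptyset^t|=0$. Second, your strict range $c<s$ is more accurate than the paper's $c\le s$, since $\{u_s>s\}=\emptyset$ while $\{u>s\}$ need not be empty.
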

\begin{proof}
    For $0<c\le s$, it is easy to verify $\{u>c\} = \{u_s>c\}$, and so $\{u>c\}^t=\{u_s>c\}^t$. Thus,
    \begin{align*}
        (u_s)^t(x) ={}& \sup\{0<c\le s: x\in\{u_s>c\}^t\}\\
        ={}& \sup\{0<c\le s: x\in\{u>c\}^t\} \\
        ={}& \min\{\sup\{c>0: x\in \{u>c\}^t\},s\} \\
        ={}& (u^t)_s(x), \quad \forall\, x\in\Omega.
    \end{align*}
    Similarly, for $c>s$, $\{u>c\}^t = \{(u-s)_+>c-s\}^t$ and so
    \begin{align*}
        (u-s)_+^t(x) ={}& \sup\{c>0: x\in \{(u-s)_+>c\}^t\} \\
        ={}& \sup\{c>0: x\in \{u>c+s\}^t\} \\
        ={}& \begin{cases}
            0, & \text{if $u^t(x)\le s$}\\
            u^t(x)-s, & \text{if $u^t(x)> s$}
        \end{cases} \\
        ={}& (u^t-s)_+,\quad \forall\, x\in\Omega.
    \end{align*}
    Hence, for $0<\gamma<s$,
    \[ (u_{\gamma,s})^t = \min\{(u-\gamma)_+^t,s-\gamma\} = \min\{(u^t-\gamma)_+, s-\gamma\} = (u^t)_{\gamma,s}. \qedhere\]
\end{proof}

\section{Quantitative Stability via Steiner Symmetrization}\label{sec3}

We first show that $\epsilon_0$ controls the deficit in the $p$-Dirichlet energy. We begin with the required convexity property.

\begin{lemma}\label{good convex}
    Let $v\in W_0^{1,\infty}(\Omega)$ with $v\ge 0$, $v\in C^1\{0<v<\lVert v\rVert_{L^\infty(\Omega)}\}$, and assume that $\{v>v_0\}$ is convex for every $0\le v_0<\lVert v\rVert_{L^{\infty}(\Omega)}$. For $\tau\in \mathbb{S}^{N-1}$, let $v^{t,\tau}$ denote the continuous Steiner symmetrization of $v$ in the direction $\tau$, and define
    \[ A_v = \{\tau\in \mathbb{S}^{N-1} : |\{x\in\R^N: \nabla v(x)\neq 0,\, \nabla v(x)\cdot\tau =0\}| = 0\}.\]
    Then $|A_v|=|\mathbb{S}^{N-1}|$. Moreover, for every $\tau\in A_v$,
    \[ \phi(\lambda):=\int_{\R^N} |\nabla v^t|^p\]
    is decreasing and convex with respect to $\lambda= \frac{1-e^{-t}}{2}\in [0,1/2]$.
\end{lemma}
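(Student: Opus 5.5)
Two things have to be shown: that $A_v$ has full measure on the sphere, and that $\phi$ is decreasing and convex in $\lambda$. Throughout we rotate so that $\tau=e_N$ and write $x=(x',y)$.

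\emph{Full measure of $A_v$.} Let $E=\{x: \nabla v(x)\neq 0\}$ and set
\[ F=\{(x,\tau)\in E\times\mathbb{S}^{N-1}: \nabla v(x)\cdot\tau=0\}. \]
For each fixed $x\in E$, the slice $\{\tau:\nabla v(x)\cdot\tau=0\}$ is a great sphere. It therefore has $\mathcal{H}^{N-1}$-measure zero on $\mathbb{S}^{N-1}$. By Fubini/Tonelli, $F$ has product measure zero. Applying Fubini in the other order, for a.e.\ $\tau$ the slice $\{x\in E:\nabla v(x)\cdot\tau=0\}$ has Lebesgue measure zero. This says exactly that $\tau\in A_v$, so $|A_v|=|\mathbb{S}^{N-1}|$.

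\emph{Explicit form of $v^t$.} Every superlevel set $\{v>c\}$ is convex. Hence each fiber $\{v>c\}(x')$ is an open interval $(a_c(x'),b_c(x'))$. By the interval-evolution property, the continuous Steiner symmetrization translates this interval to the one with center $m_c(x')e^{-t}$ and the same length, where $m_c=(a_c+b_c)/2$. Writing $e^{-t}=1-2\lambda$, the new endpoints are
\[ a_c+\lambda(b_c-a_c)\cdot 0 - \lambda(a_c+b_c),\qquad b_c-\lambda(a_c+b_c). \]
These are affine in $\lambda$. Consequently the level set $\{v^t=c\}$ is obtained from $\{v=c\}$ by the fiberwise shift $y\mapsto y-2\lambda m_c(x')$. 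The shift is affine in $\lambda$, and its size depends on $(x',c)$.

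\emph{Change of variables.} We use coordinates $(x',c)$ on each of the two graph pieces $y=a_c(x')$ and $y=b_c(x')$. These are legitimate on the region where $v$ is $C^1$ with $\partial_y v\neq 0$. Since $\tau\in A_v$, that region covers $\{\nabla v\neq 0\}$ up to a null set, so the coarea/change-of-variables formula expresses $\int|\nabla v^t|^p$ as a sum over the two graphs.

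On the upper graph, let $Y(x',c)=b_c(x')-2\lambda m_c(x')$ be the height of the level set. Then $\partial_y v^t=1/\partial_c Y$ and $\nabla_{x'}v^t=-\nabla_{x'}Y/\partial_c Y$. This gives
\[ \int|\nabla v^t|^p=\int\!\!\int \frac{\bigl(1+|\nabla_{x'}Y|^2\bigr)^{p/2}}{|\partial_c Y|^{p-1}}\,dx'\,dc, \]
plus the analogous term for the lower graph. The Jacobian $|\partial_c Y|$ is what converts $dy$ into $dc$.

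\emph{Convexity and monotonicity.} On the upper graph $\partial_c Y<0$, and on the lower graph $\partial_c Y>0$; both $Y$ and its derivatives are affine in $\lambda$. The integrand $(z,w)\mapsto (1+|z|^2)^{p/2}w^{1-p}$ is jointly convex for $w>0$, since it is the perspective-type function of the convex function $|(1,z)|^p$. The perspective of a convex function $h$ is $w\,h(\cdot/w)$; here the integrand equals $w\,h(1/w,\,z/w)$ with $h(s,\zeta)=|(s,\zeta)|^p$, so it is convex. Composing a convex function with maps that are affine in $\lambda$ keeps it convex, and integrating preserves convexity. Therefore $\phi$ is convex on $[0,1/2]$.

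Monotonicity then follows from convexity together with the P\'olya--Szeg\H{o} property of Proposition~\ref{Steiner prop}(6), $\phi(\lambda)\le\phi(0)$. By the semigroup property, the same inequality holds starting from any intermediate time $t_0$: $\phi(\lambda')\le\phi(\lambda)$ whenever $\lambda'\ge\lambda$. So $\phi$ is nonincreasing.

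\emph{Main obstacle.} The hardest step is justifying the change of variables rigorously. Three points need care:
\begin{itemize}
\item the set where $\partial_y v=0$ but $\nabla v\neq 0$ must be negligible, which is exactly where $\tau\in A_v$ is used;
\item the flat pieces of the level sets must be handled;
\item the $C^1$ regularity is needed only on $\{0<v<\lVert v\rVert_\infty\}$, so the region $\{v=\max\}$ has to be handled separately; it contributes zero since $\nabla v=0$ there a.e.
\end{itemize}
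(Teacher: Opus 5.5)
Your proposal is correct and follows essentially the same route as the paper. You use Fubini on $\{\nabla v\neq 0\}\times\mathbb{S}^{N-1}$ for the full measure of $A_v$. You then change variables $(x',y)\mapsto(x',c)$ along the two endpoint graphs, whose heights $(1-\lambda)a_c-\lambda b_c$ and $(1-\lambda)b_c-\lambda a_c$ are affine in $\lambda$. Convexity comes from the integrand $(1+|z|^2)^{p/2}w^{1-p}$ composed with these affine maps. The paper phrases this last step as joint convexity of $b\,(a/b)^p$ together with convexity of $a(\lambda)$ and monotonicity of $x^p$. In both, monotonicity comes from Proposition~\ref{Steiner prop}(6).

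Your semigroup remark makes explicit what the paper leaves implicit when it says $\phi$ is decreasing. One point to tighten: the change of variables needs $\partial_y v\neq 0$ at \emph{both} endpoints of a fiber, as the paper requires, because the shifted height $Y$ depends on $a_c$ and $b_c$ together.
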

\begin{proof}
    We first show that $|A_v|=|\mathbb{S}^{N-1}|$. For a fixed $x\in \R^N$ such that $\nabla v(x)\neq 0$,
    \[ \{\tau\in\mathbb{S}^{N-1}: \nabla v(x)\cdot\tau = 0\}\]
    is a null set of $\mathbb{S}^{N-1}$. Therefore, by Fubini's theorem,
    \begin{align*}
        0 ={}& \int_{\R^N} \chi_{\{\nabla v(x)\neq 0\}} \int_{\mathbb{S}^{N-1}} \chi_{\{\nabla v(x)\cdot \tau =0\}}\,d\mathcal{H}^{N-1}\,dx \\
        ={}& \int_{\mathbb{S}^{N-1}}|\{x\in\R^N: \nabla v(x)\neq 0,\,\nabla v(x)\cdot \tau=0\}|\,d\mathcal{H}^{N-1}.
    \end{align*}
    Consequently, $|A_v|=|\mathbb{S}^{N-1}|$.

    Now fix $\tau\in A_v$. Without loss of generality, we may assume $\tau=e_N$ and write $x=(x',y)\in \R^{N-1}\times \R$. We can see from
    \[ |\{x:v_y(x)=0,\, \nabla v(x)\neq 0\}|=0,\]
    that almost every point with $\nabla v \neq 0$ belongs to a region where $v_y\neq 0$.

    By Proposition \ref{Steiner prop}(6), $\phi(\lambda)$ is decreasing, so we only need to show its convexity. Let
    \[ K:= \{(x',v_0)\in \R^{N-1}\times \R: E_{v_0}(x'):=\{y:v(x',y)>v_0\}\neq \varnothing \}.\]
    Since $\{v>v_0\}$ is convex, every nonempty section $E_{v_0}(x')$ is an interval. Hence, for $(x',v_0)\in K$, we can write
    \[ E_{v_0}(x') = (y_1(x',v_0),y_2(x',v_0)), \quad y_1(x',v_0) < y_2(x',v_0).\]
    Thus, for almost every $(x',v_0)\in K$, the corresponding points $z_k:=(x',y_k(x',v_0))$, $k=1,2$ satisfy
    \[ \nabla v(z_k)\neq 0,\quad v_y(z_k)\neq 0,\]
    except for a set which contributes zero to the $p$-Dirichlet energy. Indeed, by $e_N\in A_v$,
    \[ |\{ \nabla v\neq 0,\, v_y=0\}|=0,\]
    while the set $\{\nabla v=0\}$ contributes nothing to $\int_{\R^N} |\nabla v|^p\,dx$. It is clear that $z_k \in \{0<v<\lVert v\rVert_{L^\infty(\Omega)}\}$, and so $v$ is $C^1$ near $z_k$, and $v_y(z_k)\neq 0$, $k=1,2$. Therefore, the implicit function theorem shows that there is an open neighborhood $V\subset K$ of $(x',v_0)$ such that the equation $v(x',y)=v_0$ has exactly two solutions $y=y_k(x',v_0)$ in $V$, where $y_k\in C^1(V)$, $k=1,2$, and $\partial y_1/\partial v_0 > 0 > \partial y_2/\partial v_0$.
    
    Consequently, by changing variables $(x',y) \mapsto (x',v_0=v(x',y))$ and observing that $\{\nabla v=0\}$ contributes zero to the Dirichlet energy, we obtain
    \begin{align*}
        \phi(\lambda) ={}& \sum_{k=1}^2 \int_K \Big| (\partial y_k^t/\partial v)^{-1}(1+\sum_{i=1}^{N-1} (\partial y_k^t/ \partial x_i)^2)^{1/2}\Big|^{p}|\partial y_k^t/\partial v|,
    \end{align*}
    where 
    \begin{equation}\label{y12t}
        \begin{cases}
            y_1^t = \frac 12 (y_1-y_2 + e^{-t}(y_1+y_2)) = (1-\lambda)y_1 -\lambda y_2,\\
            y_2^t = \frac 12 (y_2-y_1 + e^{-t}(y_1+y_2)) = -\lambda y_1 + (1-\lambda)y_2.
        \end{cases}
    \end{equation}
    Set $a(\lambda):=(1+\sum_{i=1}^{N-1} (\partial y_1^t/ \partial x_i)^2)^{1/2}$ and $b(\lambda):=|\partial y_1^t/\partial v|$. Then
    \[ b(\lambda) = (1-\lambda)\frac{\partial y_1}{\partial v} - \lambda \frac{\partial y_2}{\partial v}\]
    is linear in $\lambda$. Moreover, $a(\lambda)$ is convex in $\lambda$. Hence, for $\lambda_1,\lambda_2\in(0,1/2)$ and $\xi \in(0,1)$, the convexity and monotonicity of $x^p$ give
    \begin{align*}
        &\int_K \xi b(\lambda_1)\bigl|\frac{a(\lambda_1)}{b(\lambda_1)}\bigr|^p + (1-\xi)b(\lambda_2)\bigl|\frac{a(\lambda_2)}{b(\lambda_2)}\bigr|^p\\
        &\ge \int_K \big(\xi b(\lambda_1)+(1-\xi)b(\lambda_2)\big)\bigl|\frac{\xi a(\lambda_1) + (1-\xi)a(\lambda_2)}{\xi b(\lambda_1)+(1-\xi)b(\lambda_2)}\bigr|^p\\
        & = \int_K b(\xi\lambda_1 + (1-\xi)\lambda_2)\bigl|\frac{\xi a(\lambda_1) + (1-\xi)a(\lambda_2)}{b(\xi\lambda_1+(1-\xi)\lambda_2)}\bigr|^p \\
        & \ge \int_K b(\xi\lambda_1 + (1-\xi)\lambda_2)\bigl|\frac{a(\xi\lambda_1 + (1-\xi)\lambda_2)}{b(\xi\lambda_1+(1-\xi)\lambda_2)}\bigr|^p.
    \end{align*}
    As a result,
    \begin{align*}
        & \xi \phi(\lambda_1) + (1-\xi)\phi(\lambda_2) \\
        & = \int_K \xi b(\lambda_1)\bigl|\frac{a(\lambda_1)}{b(\lambda_1)}\bigr|^p + (1-\xi)b(\lambda_2)\bigl|\frac{a(\lambda_2)}{b(\lambda_2)}\bigr|^p \\
        &\qquad + \xi b(1-\lambda_1)\bigl|\frac{a(1-\lambda_1)}{b(1-\lambda_1)}\bigr|^p + (1-\xi)b(1-\lambda_2)\bigl|\frac{a(1-\lambda_2)}{b(1-\lambda_2)}\bigr|^p\\
        & \ge \int_K b(\xi\lambda_1 + (1-\xi)\lambda_2)\bigl|\frac{a(\xi\lambda_1 + (1-\xi)\lambda_2)}{b(\xi\lambda_1+(1-\xi)\lambda_2)}\bigr|^p \\
        & \qquad + b(1-(\xi\lambda_1 + (1-\xi)\lambda_2))\bigl|\frac{a(1-(\xi\lambda_1 + (1-\xi)\lambda_2))}{b(1-(\xi\lambda_1+(1-\xi)\lambda_2))}\bigr|^p \\
        & = \phi(\xi\lambda_1 + (1-\xi)\lambda_2).\qedhere
    \end{align*}
\end{proof}
\begin{remark}\label{rmk1}
    This convexity argument requires the one-dimensional sections of all superlevel sets to be intervals. Otherwise, $v(x',y)=v_0$ may have $2m$ solutions $y_k(x',v)$, and when the intervals $[y_{2k-1}^t,y_{2k}^t]$, $k=1,\dots,m$, coalesce during continuous Steiner symmetrization, $\phi(\lambda)$ may jump and fail to be convex.
\end{remark}

Since $A_u$ has full measure on $\mathbb{S}^{N-1}$, after a rotation we may assume that $e_N\in A_u$. Under condition (c), Lemma \ref{good convex} then gives the required convexity of the Dirichlet energy of $u_{\gamma,s}$ along the continuous symmetrization.

\begin{proposition}\label{Prop 1}
    Suppose conditions (a)--(d) are satisfied, and $\epsilon_0<\min\{\mathbf{c}/4,1\}$. Set $C_0:=\lVert u\rVert_{C^1(\bar\Omega)}$ and $C_1:=\lVert f\rVert_{C^0([0,C_0])}$. Then there exists $C_3>0$ depending only on $C_0$, $C_1$, $L$, $\mathbf{c}$, $N$, $p$, and $\diam(\Omega)$ such that, for any $s\in (0,s_*)$ and $0<\gamma < \min\{\epsilon_0^{1/\alpha},s\}$, the following estimate holds:
    \begin{equation}\label{Steiner est}
        \int_{\R^N} |\nabla u_{\gamma,s}|^p \,dx- \int_{\R^N}|\nabla (u_{\gamma,s})^{\infty}|^p\,dx \le C_3\epsilon_0.
    \end{equation}
\end{proposition}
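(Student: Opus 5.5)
The plan is to use the convexity from Lemma \ref{good convex}, together with a first-variation computation at $t=0$, in the spirit of the Brock--Henrot continuous symmetrization argument. Write $w:=u_{\gamma,s}$ and $\phi(\lambda)=\int_{\R^N}|\nabla w^t|^p$ with $\lambda=\frac{1-e^{-t}}{2}$. By condition (c), every superlevel set of $w$ is convex. Moreover $w$ is $C^1$ on $\{0<w<s-\gamma\}$, and after the rotation $e_N\in A_u$, which gives $e_N\in A_w$. So Lemma \ref{good convex} applies: $\phi$ is convex and decreasing on $[0,1/2]$, with $\phi(1/2)=\int|\nabla w^\infty|^p$. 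Convexity gives
\[ \phi(0)-\phi(1/2)\le -\tfrac12\,\phi'(0^+). \]
It therefore suffices to prove $-\phi'(0^+)\le 2C_3\epsilon_0$, i.e.\ that the first variation of the energy along the symmetrization is $O(\epsilon_0)$.

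To compute $\phi'(0^+)$ I will use the representation from the proof of Lemma \ref{good convex}. The level-set endpoints $y_1,y_2$ move with $\partial_\lambda y_1^t=-(y_1+y_2)$ and $\partial_\lambda y_2^t=-(y_1+y_2)$. This is a rigid vertical translation of each horizontal chord by the vector field $V=-(y_1+y_2)e_N$, where $V$ depends on $(x',v_0)$. Differentiating under the integral and undoing the change of variables should express $\phi'(0)$ as a first variation of $\int|\nabla w|^p$ under the deformation $V$. Integrating by parts in the region $\{\gamma<u<s\}$ and using $-\Delta_p u=f(u)$ there, the contribution should reduce to two parts:
\begin{enumerate}[(i)]
\item boundary integrals over the level sets $\{u=\gamma\}$ and $\{u=s\}$ of the form $\int |\nabla u|^{p-1}\,V\cdot n$, possibly with a $(p-1)$ factor;
\item an interior term $\int f(u)\,\partial_\lambda w^t$.
\end{enumerate}
On $\{u=s\}$ the truncation makes $w$ constant beyond it, and equimeasurability (the chord length is preserved) should make the top contribution vanish. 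The interior term can be rewritten, using Cavalieri's principle (Proposition \ref{Steiner prop}(3)), as a derivative of $\int F(w^t)$ type quantities. This derivative vanishes, because the translation of each chord preserves its measure.

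The remaining boundary term on $\{u=\gamma\}$ is where $\epsilon_0$ enters. Since $\gamma<\epsilon_0^{1/\alpha}$, Corollary \ref{Cor1}(2) gives $d(x,\partial\Omega)\le 2\gamma/\mathbf c$ on $\{u=\gamma\}$. Condition (b) then gives $\big||\nabla u|-\mathbf c\big|\le \epsilon_0+L(2\gamma/\mathbf c)^\alpha\le C\epsilon_0$ on that level set. I will write $|\nabla u|^{p-1}=\mathbf c^{p-1}+O(\epsilon_0)$ there. The constant part gives $\mathbf c^{p-1}\int_{\{u=\gamma\}}V\cdot n$. This is the first variation of the volume of $\{u>\gamma\}$ under the chord translations, which is zero by equimeasurability; indeed, it is precisely Serrin's mechanism. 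The error part is bounded by $C\epsilon_0\,\sup|V|\,\mathcal H^{N-1}(\{u=\gamma\})$. Here $|V|\le 2\diam(\Omega)$. The perimeter is bounded by convexity, since $\{u>\gamma\}\subset B_{\diam\Omega}$ gives perimeter at most $N w_N\diam(\Omega)^{N-1}$; alternatively one can use Corollary \ref{Cor1}(3) with $|\nabla u|>\mathbf c/2$. This yields $C_3$ depending only on the listed quantities.

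The main obstacle is the rigorous justification of the first-variation formula at $\lambda=0$. One has to differentiate $\phi$ in $\lambda$ under the integral over $K$, handle the set where $\nabla w=0$ and the possible nonsmoothness of $y_k$, and identify the one-sided derivative with the boundary expression above. I would first try a direct differentiation of the explicit integrand $b|a/b|^p$ at $\lambda=0$, followed by the integration by parts in $v_0$ and $x'$ on $K$. The $\lambda$-convexity makes difference quotients monotone, which justifies taking the limit by monotone convergence. If that route becomes too delicate, the fallback is a comparison argument: bound $\phi(0)-\phi(\lambda)$ directly for small $\lambda$ by testing the equation with $w-w^t$ and using $|w-w^t|\le C\lambda$ from Proposition \ref{Steiner prop}(6).
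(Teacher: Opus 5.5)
\textbf{Gap: the boundary term on $\{u=s\}$ does not vanish.} Your frame matches the paper's Step four: convexity of $\phi$ from Lemma~\ref{good convex} reduces everything to showing $-\phi'(0^+)\lesssim\epsilon_0$. The problem is the upper level set. Carry out the domain-variation computation you describe on $D=\{\gamma<u<s\}$, using $-\Delta_p u=f(u)$ and the vanishing of the $f$-term. You get two boundary terms of the form $-(p-1)\int |\nabla u|^p\,V\cdot n\,d\mathcal{H}^{N-1}$, one on $\{u=\gamma\}$ and one on $\{u=s\}$ (the power is $p$, not $p-1$). Equimeasurability gives only $\int_{\{u=s\}}V\cdot n\,d\mathcal{H}^{N-1}=0$, since this is the flux of a vertical field depending only on $x'$. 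That removes the part of the integrand that is constant on the level set. It does not remove $-(p-1)\int_{\{u=s\}}(|\nabla u|^p-\mathbf{c}^p)\,V\cdot n$. On $\{u=s\}$ you have no smallness of $|\nabla u|-\mathbf{c}$: at best $L(2s/\mathbf{c})^\alpha+\epsilon_0$, and for general $s\in(0,s_*)$ nothing at all. Yet $C_3$ must be independent of $s$. Even with the later choice $s\sim\epsilon_0^{1/(\alpha+1)}$ in Theorem~\ref{thm1}, this term is of order $s^\alpha\gg\epsilon_0$. So your argument as written gives only $-\phi'(0^+)\lesssim\epsilon_0+{}$(an uncontrolled top term).

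\textbf{Missing idea: the top term has a favorable sign, by P\'olya--Szeg\H{o}.} The paper never truncates from above when it uses the equation. It tests the weak form with $(u-\gamma)_+^t-(u-\gamma)_+\in W_0^{1,p}(\Omega)$, so only a strip near $\{u=\gamma\}$ appears, and there $\big||\nabla u|-\mathbf{c}\big|\le L(2\gamma/\mathbf{c})^\alpha+\epsilon_0\lesssim\epsilon_0$. It then passes to $u_{\gamma,s}$ through Lemma~\ref{commute}: $(u-\gamma)_+=u_{\gamma,s}+(u-s)_+$ with gradients of disjoint support, so
\[ \int|\nabla (u-\gamma)_+^t|^p-\int|\nabla (u-\gamma)_+|^p=\bigl(\phi(\lambda)-\phi(0)\bigr)+\Bigl(\int|\nabla (u-s)_+^t|^p-\int|\nabla (u-s)_+|^p\Bigr)\le \phi(\lambda)-\phi(0), \]
by Proposition~\ref{Steiner prop}(6). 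In your language, the $\{u=s\}$ term is $-\psi'(0^+)$ with $\psi(\lambda)=\int|\nabla(u-s)_+^t|^p$, so it is $\ge 0$ and can be dropped.

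Your fallback of testing with $w-w^t$ has the same defect: it produces a strip around $\{u=s\}$ where $|\nabla u|^p-\mathbf{c}^p$ is not small. Test with $(u-\gamma)_+^t-(u-\gamma)_+$ and use the splitting above instead.

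The paper's route also avoids the ``main obstacle'' you flagged. It converts the tested identity into an energy inequality with the pointwise convexity bound $b^p\ge p\,a^{p-1}b-(p-1)a^p$ for $a,b\ge 0$ (\cite[Lemma 3.2]{Cao2025}), plus a strip estimate using only $|u-u^t|\le C_0\diam(\Omega)t$. So it needs no second derivatives of $u$ and no regularity of the chord-midpoint field $V$. Once the sign issue is fixed, your first-variation route would give a slightly cleaner constant, since the $f$-term vanishes exactly and no $|\{0<u<\gamma\}|$ term appears. The cost is justifying a Pohozaev-type identity for a $C^{1,\alpha}$ solution with a possibly non-Lipschitz deformation field.
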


\begin{proof}
    When $\epsilon_0=0$, by Serrin's theorem and \cite{Cao2025}, the left side of \eqref{Steiner est} is zero and the statement holds. Thus, tt suffices to consider $\epsilon_0>0$. This follows from the method of Cao, Wei, and Zhan \cite{Cao2025}. For completeness, we give the full proof.

    Since $\epsilon_0<\mathbf{c}/4$, we have
    \begin{equation}\label{c quarter prop}
        \mathbf{c}/2-\epsilon_0>\mathbf{c}/4.
    \end{equation}

    \noindent \textbf{Step one.} As in Lemma 3.5 of \cite{Cao2025}, we show that
    \begin{equation}\label{est f1}
        \int_{\Omega} f(u) [(u-\gamma)_+^t-(u-\gamma)_+] \,dx \ge -2C_1C_0\diam(\Omega)t |\{0<u<\gamma\}| + o(t),
    \end{equation}
    where $o(t)$ denotes a quantity satisfying $o(t)/t\to 0$ as $t\to 0$.
    
    It suffices to consider $\gamma\in(0,C_0)$. By the definition of $C_0$, $(u-\gamma)_+$ has Lipschitz constant at most $C_0$. We show first that $(u-\gamma)^t_+-(u-\gamma)_+\in W_0^{1,p}(\Omega)$. Clearly, $(u-\gamma)_+ \in W_0^{1,p}(\Omega)$. By Proposition \ref{Steiner prop}(6), for $x\in \Omega^c$, where $u(x)=0$,
    \[ |u^t(x)| \le C_0 \diam(\Omega) t < \gamma,\]
    showing that $(u-\gamma)^t_+(x) = (u^t-\gamma)_+(x)=0$. Thus $(u-\gamma)^t_+ \in W_0^{1,p}(\Omega)$.
    
    Again by Proposition \ref{Steiner prop} (6),
    \begin{equation}\label{Lip u ut}
        |(u(x)-\gamma)^t_+ - (u(x)-\gamma)_+| \le C_0\diam(\Omega)t, \quad \forall\, x\in\R^N,\, \gamma>0,\, t>0.
    \end{equation}
    Thus,
    \begin{align*}
        & \int_{\Omega} f(u) [(u-\gamma)_+^t-(u-\gamma)_+]\,dx\\
        & = \int_{\Omega} [f(u)-f((u-\gamma)_+ + \gamma)] [(u-\gamma)_+^t-(u-\gamma)_+]\,dx \\
        &\quad + \int_{\Omega} f((u-\gamma)_+ +\gamma) [(u-\gamma)_+^t-(u-\gamma)_+]\,dx \\
        &=: I_1(t)+ I_2(t).
    \end{align*}
    For $I_1(t)$, by \eqref{Lip u ut} and $|f(x)|\le C_1$ for $x\in [0,C_0]$,
    \[ |I_1(t)| \le 2C_1C_0\diam(\Omega) t |\{0<u<\gamma\}|.\]
    For $I_2(t)$, set $F(z) = \int_0^z f(x+\gamma)\,dx$ and $v=(u-\gamma)_+$. By Proposition \ref{Steiner prop}(3),
    \begin{align*}
        0 ={}& \int_{\Omega} F(v^t)\,dx - \int_{\Omega} F(v)\,dx \\
        ={}& \int_{\Omega}\bigl[\int_0^1 f\Big(v(x) +\gamma + \theta \big(v^t(x)-v(x)\big) \Big)\,d\theta\bigr] (v^t(x)-v(x))\,dx.
    \end{align*}
    Thus, by $f$ continuous and \eqref{Lip u ut},
    \begin{align*}
        I_2(t) ={}& \int_{\Omega} f(v+\gamma)(v^t-v)\,dx \\
        ={}& \int_{\Omega}\bigl[\int_0^1 \bigl(f(v+\gamma) - f\Big(v +\gamma + \theta \big(v^t-v\big) \Big)\bigr)\,d\theta\bigr] (v^t-v)\,dx \\
        \ge{}& -\sup\bigl\{ |f(v+\gamma) - f(v+\gamma + \theta(v^t-v))| : x\in\Omega,\, \theta\in[0,1]\bigr\} \cdot C_0 \diam(\Omega)t\\
        \ge{}& o(t), \quad t\to 0.
    \end{align*}
    Combining the two estimates gives \eqref{est f1}. When $\gamma>C_0\diam(\Omega)t$, we may therefore test \eqref{Eq1} with $(u-\gamma)^t_+-(u-\gamma)_+$ and obtain
    \begin{align*}
        \int_{\Omega} f(u)[(u-\gamma)_+^t-(u-\gamma)_+]\,dx ={}& \int_{\Omega} |\nabla u|^{p-2}\nabla u \cdot \nabla\big( (u^t-\gamma)_+-(u-\gamma)_+\big)\,dx\\
        ={}& \int_{\{u^t>\gamma\}} |\nabla u|^{p-2} \nabla u\cdot \nabla u^t \,dx - \int_{\{u>\gamma\}} |\nabla u|^p\,dx \\
        \le{}& \int_{\{u^t>\gamma\}} |\nabla u|^{p-1}|\nabla u^t| \,dx - \int_{\{u>\gamma\}} |\nabla u|^p\,dx.
    \end{align*}
    Hence,
    \begin{equation}\label{est 1}
        \int_{\{u^t>\gamma\}} |\nabla u|^{p-1}|\nabla u^t|\,dx - \int_{\{u>\gamma\}} |\nabla u|^{p}\,dx \ge -2C_1C_0 \diam(\Omega) t |\{0<u<\gamma\}| + o(t).
    \end{equation}

    \noindent \textbf{Step two.} We use the method of \cite[Lemma 3.4]{Cao2025} to derive
    \begin{equation}\label{est 2}
        \begin{aligned}
            & \int_{\{u^t>\gamma\}} |\nabla u|^p \,dx- \int_{\{u>\gamma\}} |\nabla u|^p\,dx \\
            & \le p 2^{2p}\mathbf{c}^{-1}\bigl(L(\frac{2}{\mathbf{c}}(C_0\diam(\Omega)t+\gamma))^{\alpha}+\epsilon_0\bigr)C_0C_2\diam(\Omega)t,
        \end{aligned}
    \end{equation}
    Indeed, by the definition of $C_0$, the Lipschitz constant of $u$ is at most $C_0$. Since $\Omega\subset B_{\diam(\Omega)}$,
    \[|u(x)-u^t(x)|\le C_0\diam(\Omega)t,\quad \forall\, x\in\Omega,\, t\in [0,\infty].\]
    For $\gamma> C_0\diam(\Omega)t$, we have 
    \begin{align*}
        \{u>\gamma\}\Delta \{u^t>\gamma\}\subset{}& \{\gamma-C_0\diam(\Omega)t\le u \le \gamma+ C_0\diam(\Omega)t\}.
    \end{align*}
    By condition (b) and Corollary \ref{Cor1} (2), for all
    \[ x\in \{\gamma-C_0\diam(\Omega)t\le u \le \gamma+C_0\diam(\Omega)t\},\]
    there holds
    \begin{align*}
        ||\nabla u(x)|-\mathbf{c}| \le L d(x,\partial\Omega)^{\alpha} + \epsilon_0 \le L(\frac{2}{\mathbf{c}}(C_0\diam(\Omega)t+\gamma))^{\alpha}+\epsilon_0 < \mathbf{c}
    \end{align*}
    provided
    \[ C_0\diam(\Omega) t < \gamma < \bigl(\frac{\mathbf{c}}{4L}\bigr)^{1/\alpha}\frac{\mathbf{c}}2 - C_0\diam(\Omega)t,\]
    in view of \eqref{c quarter prop}.
    
    Proposition \ref{Steiner prop}(1) shows that $|\{u>\gamma\}| = |\{u^t>\gamma\}|$. By \eqref{c quarter prop} and Corollary \ref{Cor1}(3), $\{u=c\}$ is locally $C^1$ and has finite perimeter for all
    \[0<c<\bigl(\frac{\mathbf{c}}{4L}\bigr)^{1/\alpha}\frac{\mathbf{c}}{2}.\]
    Hence, using the elementary inequality
    \[ |a^p-b^p|\le p\max\{a,b\}^{p-1}|a-b|, \quad \forall\, a,b>0,\, p>1,\] 
    and co-area formula, we derive that 
    \begin{align*}
        & \int_{\{u^t>\gamma\}} |\nabla u|^p\,dx - \int_{\{u>\gamma\}} |\nabla u|^p\,dx \\
        & = \int_{\{u^t>\gamma\}} (|\nabla u|^p-\mathbf{c}^p) \,dx - \int_{\{u>\gamma\}} (|\nabla u|^p-\mathbf{c}^p)\,dx \\
        & \le \int_{\{\gamma-C_0\diam(\Omega)t \le u \le \gamma +C_0\diam(\Omega)t\}} ||\nabla u|^p - \mathbf{c}^p|\,dx \\
        & \le \int_{\gamma-C_0\diam(\Omega)t}^{\gamma +C_0\diam(\Omega)t} \int_{\{u=y\}} |\nabla u|^{-1} p (2\mathbf{c})^{p-1}||\nabla u|-\mathbf{c}|\,d\mathcal{H}^{N-1}\,dy\\
        & \le p(2\mathbf{c})^{p-1} \bigl(L(\frac{2}{\mathbf{c}}(C_0\diam(\Omega)t+\gamma))^{\alpha}+\epsilon_0\bigr) \\
        &\qquad \cdot \int_{\gamma-C_0\diam(\Omega)t}^{\gamma +C_0\diam(\Omega)t}\int_{\{u=y\}}(2/\mathbf{c})^p|\nabla u|^{p-1}\,d\mathcal{H}^{N-1}dy \\
        & \le p 2^{2p}\mathbf{c}^{-1} \bigl(L(\frac{2}{\mathbf{c}}(C_0\diam(\Omega)t+\gamma))^{\alpha}+\epsilon_0\bigr)C_0C_2\diam(\Omega)t.
    \end{align*}

    \noindent \textbf{Step three.} By \cite[Lemma 3.2]{Cao2025}, if $G(x)=\int_0^x g(y)\,dy$ with $g\ge 0$ strictly increasing, and $h(y)=yg(y)-G(y)$, then for $\gamma>0$,
    \begin{equation}\label{Lem3.2 in Wei}
        \begin{aligned}
            &\int_{\{u^t>\gamma\} } G(|\nabla u^t|)\,dx - \int_{ \{u>\gamma\} }G(|\nabla u|)\,dx \\
            &\ge \int_{\{u^t>\gamma\}} g(|\nabla u|)|\nabla u^t|\,dx - \int_{\{u>\gamma\}} g(|\nabla u|)|\nabla u|\,dx\\
            &\qquad- \bigl(\int_{\{u^t>\gamma\}} h(|\nabla u|)\,dx - \int_{\{u>\gamma\}} h(|\nabla u|)\,dx\bigr).
        \end{aligned}
    \end{equation}
    Let $G(x)=x^p$, $g(x)=px^{p-1}$, $h(x)=(p-1)x^p$, using \eqref{est 1} and \eqref{est 2}, we obtain that 
    \begin{align*}
        & \int_{\{u^t>\gamma\}} |\nabla u^t|^p\,dx -\int_{\{u>\gamma\}}|\nabla u|^p\,dx \\
        &\ge p\bigl(\int_{\{u^t>\gamma\}} |\nabla u|^{p-1}|\nabla u^t|\,dx - \int_{\{u>\gamma\}} |\nabla u|^{p}\,dx\bigr) \\
        &\qquad - (p-1)\bigl(\int_{\{u^t>\gamma\}} |\nabla u|^{p}\,dx- \int_{\{u>\gamma\}} |\nabla u|^p\,dx\bigr) \\
        & \ge \Big(-2pC_1C_0\diam(\Omega) |\{0<u<\gamma\}| \\
        & \qquad - p(p-1) 2^{2p}\mathbf{c}^{-1}\big(L(\frac{2}{\mathbf{c}}(C_0\diam(\Omega)t+\gamma))^{\alpha}+\epsilon_0\big)C_0C_2\diam(\Omega)\Big)t + o(t),
    \end{align*}
    provided
    \[C_0\diam(\Omega)t < \gamma < \bigl(\frac{\mathbf{c}}{4L}\bigr)^{1/\alpha}\frac{\mathbf{c}}2-C_0\diam(\Omega)t.\]

    Note that for all $s\in (\gamma,C_0)$, by Lemma \ref{commute}, we have $(u-\gamma)_+ = u_{\gamma,s} + (u-s)_+$, $(u^t-\gamma)_+ = (u_{\gamma,s})^t + (u-s)_+^t$. Hence,
    \begin{align*}
        & \int_{\{u^t>\gamma\}} |\nabla u^t|^p \,dx - \int_{\{u>\gamma\}}|\nabla u|^p \,dx \\
        & = \int_{\R^N} |\nabla (u_{\gamma,s})^t|^p \,dx -\int_{\R^N} |\nabla u_{\gamma,s}|^p \,dx + \int_{\R^N} |\nabla (u-s)_+^t|^p \,dx - \int_{\R^N}|\nabla (u-s)^+|^p \,dx \\
        & \le \int_{\R^N} |\nabla (u_{\gamma,s})^t|^p \,dx - \int_{\R^N}|\nabla u_{\gamma,s}|^p \,dx.
    \end{align*}
    Moreover, by Corollary \ref{Cor1} (3) and the co-area formula,
    \begin{align*}
        |\{0<u<\gamma\}| ={}& \int_0^\gamma \int_{\{u=y\}} |\nabla u|^{-1}\,d\mathcal{H}^{N-1}\,dy \\
        \le{}& \int_0^\gamma \int_{\{u=y\}} (2/\mathbf{c})^p |\nabla u|^{p-1} \,d\mathcal{H}^{N-1}\,dy \\
        \le{}& (2/\mathbf{c})^{p} C_2 \gamma.
    \end{align*}
    Thus, letting $t\to 0$, we deduce that, for
    \[0<\gamma<\bigl(\frac{\mathbf{c}}{4L}\bigr)^{1/\alpha}\frac{\mathbf{c}}2,
      \qquad \gamma<s<C_0,\]
    there holds
    \begin{align}\label{est small t}
        & \liminf_{t\to 0} \frac{1}{t}\bigl(\int_{\R^N} |\nabla (u_{\gamma,s})^t|^p \,dx -\int_{\R^N}|\nabla u_{\gamma,s}|^p \,dx \bigr) \notag \\
        & \ge -2p C_0C_1 \diam(\Omega) |\{0<u<\gamma\}| - p(p-1) 2^{2p}\mathbf{c}^{-1}(L\gamma^\alpha + \epsilon_0)C_0C_2\diam(\Omega) \notag \\
        & \ge - p C_0C_2\diam(\Omega) \big(2 C_1 (2/\mathbf{c})^p \gamma + (p-1) 2^{2p}\mathbf{c}^{-1}(L\gamma^\alpha + \epsilon_0)\big).
    \end{align}

    \noindent \textbf{Step four.} By condition (c), $\{u>c\}$ is convex for every $0\le c\le s_*$. Thus, for $s\in (0,s_*)$, $\{u_{\gamma,s}>c\} = \{u>c+\gamma\}$ is convex for every $c\in(0,s-\gamma)$. By Lemma \ref{good convex}, because $\nabla u_{\gamma,s} = \chi_{\{\gamma<u<s\}}\nabla u$ almost everywhere and so $e_N\in A_u\subset A_{u_{\gamma,s}}$, we see that
    \[ \phi(\lambda):= \int_{\R^N} |\nabla (u_{\gamma,s})^t|^p\,dx, \quad \lambda=\frac{1-e^{-t}}{2} \]
    is decreasing and convex for $\lambda\in (0,1/2)$. As a result, for $\lambda_0\to 0_+$ (then the related $t_0\to 0_+$), by \eqref{est small t}
    \begin{align*}
        \frac{\phi(0) - \phi(1/2)}{1/2} \le{}& \frac{\phi(0)-\phi(\lambda_0)}{\lambda_0}\\
        ={}& \frac{2t_0}{1-e^{-t_0}}\frac{1}{t_0}\bigl(\int_{\R^N} |\nabla u_{\gamma,s}|^p \,dx - \int_{\R^N} |\nabla (u_{\gamma,s})^{t_0}|^p\,dx\bigr) \\
        \le{}& 2p C_0C_2\diam(\Omega) \big(2 C_1 (2/\mathbf{c})^p \gamma + (p-1) 2^{2p}\mathbf{c}^{-1}(L\gamma^\alpha + \epsilon_0)\big).
    \end{align*}
    If $\gamma <\min\{\epsilon_0^{1/\alpha},s\}$, it follows that \eqref{Steiner est} holds with
    \[ C_3=4pC_0C_1|\Omega|\diam(\Omega) \big( 2C_1(2/\mathbf{c})^p + (p-1)2^{2p}\mathbf{c}^{-1} (L+1) \big). \qedhere\]
\end{proof}

Thanks to Proposition \ref{Steiner prop} (6) and Proposition \ref{Prop 1}, we can then derive a stability result for Steiner symmetrization. That is to say, for a.e. $\tau \in \mathbb{S}^{N-1}$, $\Omega$ is ``nearly'' symmetric with respect to some hypersurface with normal vector $\tau$.

\begin{theorem}\label{thm1}
    Assume conditions (a)--(d) are satisfied, and set $C_0:=\lVert u\rVert_{C^1(\bar\Omega)}$ and $C_1:=\lVert f\rVert_{C^0([0,C_0])}$. Then, when $\epsilon_0 < \min\{\mathbf{c}/4,1\}$,
    \begin{equation}\label{est Omega final}
        \inf_{y\in\R} |\Omega\Delta (\Omega^{\infty}+(0,y))| \le C_9\epsilon_0^{\frac{\alpha}{2(\alpha+1)}} ,
    \end{equation}
    where $C_9$ is a constant depending on $\diam(\Omega), \rho_i,N,p,\mathbf{c}$, $C_0$, $C_1$, $L$, $\alpha$, and $s_*$.
\end{theorem}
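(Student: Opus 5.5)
Fix the direction $e_N\in A_u$ as arranged before Proposition \ref{Prop 1}; the $\Omega^\infty$ in \eqref{est Omega final} is the Steiner symmetrization in this direction. The plan has three stages. First, Proposition \ref{Prop 1} turns the energy deficit into a perimeter deficit for the superlevel sets. Second, a quantitative stability result for Steiner symmetrization of convex sets gives near-symmetry of one superlevel set. Third, we pass from that level set to $\Omega$ using the thin boundary layer, and optimize the parameters.

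\textbf{Stage 1: from energy to perimeter.} Take $\gamma<\min\{\epsilon_0^{1/\alpha},s\}$ and $s<s_*$ small; later I will take $\gamma\sim\epsilon_0^{1/\alpha}$, or simply $\gamma\to0$. By Corollary \ref{Cor1}(1), on $\{\gamma<u<s\}$ we have $\mathbf{c}/2<|\nabla u|\le 2\mathbf{c}$, with $|\nabla u|$ close to $\mathbf{c}$ up to $L(2s/\mathbf{c})^\alpha+\epsilon_0$. Apply the coarea formula to $u_{\gamma,s}$ and to $(u_{\gamma,s})^\infty$. The symmetrized function has the same distribution function, so Cavalieri's principle applies, and the P\'olya--Szeg\H{o} rearrangement argument holds level by level: by H\"older with the equal quantity $-\mu'(r)=\int_{\{u=r\}}|\nabla u|^{-1}$,
\[
\int_{\{u=r\}}|\nabla u|^{p-1}\ \ge\ P(\{u>r\})^p\,(-\mu'(r))^{1-p},
\]
with equality for the symmetrized function. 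Here the symmetrized level sets have perimeter $P(\{u>r\}^\infty)\le P(\{u>r\})$. Near-constancy of $|\nabla u|$ on the layer lets me compare the two sides. The Jensen gap is of order the oscillation of $|\nabla u|$, which is at most $L s^\alpha+\epsilon_0$, and so I expect
\[
\int_\gamma^s \big(P(\{u>r\})-P(\{u>r\}^\infty)\big)\,dr\ \lesssim\ \epsilon_0+ s\,(L s^{\alpha}+\epsilon_0).
\]
An averaging argument then yields a level $r_0\in(\gamma,s)$ with
\[
P(E)-P(E^\infty)\lesssim \epsilon_0/s+s^\alpha,\qquad E=\{u>r_0\}.
\]

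\textbf{Stage 2: stability of Steiner symmetrization.} By condition (c), $E$ is convex. It has inradius at least of order $\rho_i$ (since $r_0$ is small) and diameter at most $\diam\Omega$. I would invoke the quantitative stability of Steiner symmetrization for convex sets, in the style of Fusco--Morini--Pratelli or Barchiesi--Cagnetti--Fusco. For such $E$ it gives
\[
\inf_y |E\Delta(E^\infty+(0,y))|\le C\,(P(E)-P(E^\infty))^{1/2}.
\]
For convex sets with controlled inradius and diameter, the section-center function is Lipschitz-controlled, so the usual square-root rate applies.

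\textbf{Stage 3: back to $\Omega$, and the main obstacle.} By Corollary \ref{Cor1}(2), $\Omega\setminus E\subset\{d(\cdot,\partial\Omega)<2r_0/\mathbf{c}\}$, and this boundary layer has measure $\lesssim r_0\lesssim s$ (using the perimeter bound via Corollary \ref{Cor1}(3)). The same holds after symmetrization, since $E^\infty\subset\Omega^\infty$ and the measures are equal. Hence
\[
|\Omega\Delta(\Omega^\infty+(0,y))|\lesssim s+(\epsilon_0/s+s^\alpha)^{1/2}.
\]
Balancing with $s=\epsilon_0^{1/(\alpha+1)}$ gives $\epsilon_0/s=s^\alpha=\epsilon_0^{\alpha/(\alpha+1)}$, so the total is $\lesssim\epsilon_0^{\alpha/(2(\alpha+1))}$, matching \eqref{est Omega final}. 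If $s$ exceeds $s_*$, then $\epsilon_0$ is bounded below and the estimate is trivial with a large $C_9$. The main obstacle is Stage 1: getting a genuine lower bound on the energy deficit in terms of the perimeter deficit with only a $C^{0,\alpha}$-small oscillation error. I would first try the layer-wise Hölder/coarea comparison above, absorbing errors using Corollary \ref{Cor1}(3). If the exponent bookkeeping is off, I would adjust the choice of $s$ while keeping the same structure.
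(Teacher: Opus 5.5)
Your architecture matches the paper's. You use Proposition \ref{Prop 1}, a level-by-level coarea comparison on $\{\gamma<u<s\}$, averaging to select one level, Barchiesi--Cagnetti--Fusco for that convex superlevel set, an $O(s)$ boundary layer, and $s\sim\epsilon_0^{1/(\alpha+1)}$. Stages 2 and 3 are fine.

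\textbf{The gap is the mechanism in Stage 1.} Equality in the H\"older bound $\int_{\{v=r\}}|\nabla v|^{p-1}\ge P(\{v>r\})^p(-\mu'(r))^{1-p}$ requires $|\nabla v|$ to be constant on $\{v=r\}$. That holds after Schwarz symmetrization, since the result is radial, but it is false after Steiner symmetrization. For example, $v=\max\{\min\{1-|x_1|/2,\,1-|x_2|\},0\}$ in $\R^2$ is its own Steiner symmetral in either coordinate direction. Yet on each level rectangle $|\nabla v|$ equals $1/2$ on the vertical sides and $1$ on the horizontal sides. Without that equality, H\"older gives only a \emph{lower} bound on $\int|\nabla (u_{\gamma,s})^\infty|^p$, which is the wrong direction for bounding the deficit from below. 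The shared value of $-\mu'(r)$ alone cannot prevent $|\nabla u^\infty|$ from being large on part of a level set. Near-constancy of $|\nabla u|$ says nothing directly about $|\nabla u^\infty|$. So, as written, Stage 1 does not produce the integrated perimeter bound you display.

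\textbf{The missing ingredient} is that Steiner symmetrization does not increase Lipschitz constants (Proposition \ref{Steiner prop}(5)). Set $\delta:=L(2s/\mathbf{c})^\alpha+\epsilon_0$. Then $u_{\gamma,s}$ is $(\mathbf{c}+\delta)$-Lipschitz on $\R^N$, so $|\nabla (u_{\gamma,s})^\infty|\le \mathbf{c}+\delta$ a.e. The coarea formula then gives $\int_{\R^N}|\nabla (u_{\gamma,s})^\infty|^p\le(\mathbf{c}+\delta)^{p-1}\int_\gamma^s P(E_r^\infty)\,dr$. On the other side, $\int_{\R^N}|\nabla u_{\gamma,s}|^p\ge(\mathbf{c}-\delta)^{p-1}\int_\gamma^s P(E_r)\,dr$. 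Subtract, and use Proposition \ref{Prop 1} together with $\int_\gamma^s P(E_r)\,dr\lesssim s$ from Corollary \ref{Cor1}(3). This yields exactly $\int_\gamma^s (P(E_r)-P(E_r^\infty))\,dr\lesssim \epsilon_0+s(s^\alpha+\epsilon_0)$, with no H\"older or Jensen step. This is the paper's argument, and with it substituted for your Stage 1 reasoning the rest of your proof goes through.
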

\begin{proof}
    Since $\epsilon_0<\mathbf{c}/4$, by \eqref{c quarter prop}, we may fix
    \[ 0<s<\min\bigl\{\bigl(\frac{\mathbf{c}}{4L}\bigr)^{1/\alpha}\frac{\mathbf{c}}{2},s_*\bigr\},
      \qquad 0<\gamma<\min\{\epsilon_0^{1/\alpha},s/2\}.\]
    
    For $x\in \{\gamma<u<s\}$, Corollary \ref{Cor1}(2) gives $d(x,\partial\Omega) \le 2s/\mathbf{c}$, and hence
    \begin{equation}\label{est nabla u}
        \begin{aligned}
            ||\nabla u_{\gamma,s}(x)|-\mathbf{c}| \le{}& |\nabla u(x) - \nabla u(y)| + ||\nabla u(y)|-\mathbf{c}|\\
            \le{}& L|x-y|^{\alpha} + \epsilon_0 \\
            \le{}& L(2s/\mathbf{c})^{\alpha} + \epsilon_0.
        \end{aligned}
    \end{equation}
    On the other hand, by Proposition \ref{Steiner prop} (6), the Lipschitz constant of $u_{\gamma,s}^\infty$ is less than that of $u_{\gamma,s}$, giving that 
    \begin{equation}\label{est nabla us}
        |\nabla (u_{\gamma,s})^\infty(x)| \le \lVert \nabla u_{\gamma,s}\rVert_{L^\infty(\Omega)} \le \mathbf{c} + L(2s/\mathbf{c})^\alpha + \epsilon_0,\quad  \text{for a.e. $x\in\Omega$}.
    \end{equation}
    Set $E_y:=\{u>y\}$ and $E_y^\infty:=\{u^\infty>y\}$. By the coarea formula and \eqref{est nabla u}--\eqref{est nabla us},
    \begin{align}
        \int_{\R^N} |\nabla u_{\gamma,s}|^p\,dx = {}& \int_\gamma^s \int_{\{u=y\}} |\nabla u|^{p-1}\,d \mathcal{H}^{N-1}\,dy \notag \\
        \ge{}& (\mathbf{c}-L(2s/\mathbf{c})^{\alpha}-\epsilon_0)^{p-1} \int_\gamma^s P(E_y)\,dy; \label{est 3}
    \end{align}
    Similarly,
    \begin{align}
        \int_{\R^N}|\nabla (u_{\gamma,s})^{\infty}|^p\,dx ={}& \int_\gamma^s \int_{\{u^\infty = y\}}|\nabla u^\infty|^{p-1}\,d \mathcal{H}^{N-1}\,dy \notag\\
        \le{}& (\mathbf{c} + L(2s/\mathbf{c})^\alpha + \epsilon_0)^{p-1}\int_\gamma^s P(E_y^\infty)\,dy. \label{est 4}
    \end{align}
    Thus,
    \begin{align*}
        &\int_{\R^N} |\nabla u_{\gamma,s}|^p \,dx- \int_{\R^N}|\nabla (u_{\gamma,s})^{\infty}|^p\,dx\\
        &\ge (\mathbf{c}-L(2s/\mathbf{c})^{\alpha}-\epsilon_0)^{p-1} \int_\gamma^s P(E_y)\,dy - (\mathbf{c} + L(2s/\mathbf{c})^\alpha + \epsilon_0)^{p-1} \int_\gamma^s P(E_y^\infty) \,dy\\
        & \ge (\mathbf{c} + L(2s/\mathbf{c})^\alpha + \epsilon_0)^{p-1} \int_\gamma^s (P(E_y)-P(E_y^\infty))\,dy\\
        & \quad - C_5\mathbf{c}^{p-2} (L(2s/\mathbf{c})^{\alpha}+\epsilon_0)\int_\gamma^s P(E_y)\,dy,
    \end{align*}
    Here $C_5 = 2(p-1)\max\{(1/2)^{p-2},(3/2)^{p-2}\}$ satisfies
    \[ (\mathbf{c} + L(2s/\mathbf{c})^\alpha + \epsilon_0)^{p-1} - (\mathbf{c}-L(2s/\mathbf{c})^{\alpha}-\epsilon_0)^{p-1} \le C_5 \mathbf{c}^{p-2}(L(2s/\mathbf{c})^{\alpha}+\epsilon_0).\]
    By Corollary \ref{Cor1} (3), $1 \le (2/\mathbf{c})|\nabla u|$ and so 
    \[ \int_\gamma^s P(E_y)\,dy \le \int_\gamma^s \int_{\{u=y\}} (2/\mathbf{c})^{p-1}|\nabla u|^{p-1}\,d\mathcal{H}^{N-1}\,dy \le (2/\mathbf{c})^{p-1}C_2 s.\]
    Applying \eqref{Steiner est}, we obtain the following estimate for the integral of the perimeter deficit:
    \begin{equation*}
        \int_\gamma^s (P(E_y)-P(E_y^\infty))\,dy \le C_6(\epsilon_0+ s(s^{\alpha}+\epsilon_0) ),
    \end{equation*}
    \[ C_6 = \max\bigl\{\frac{C_3}{\mathbf{c}^{p-1}}, \frac{C_5C_2}{\mathbf{c}}\bigl(\frac{2}{\mathbf{c}}\bigr)^{p-1}, \frac{C_5C_2L}{\mathbf{c}}\bigl(\frac{2}{\mathbf{c}}\bigr)^{\alpha+p-1}\bigr\}.\]
    Since $\gamma<s/2$, in particular, there is some $s_0\in(s/2,s)$ such that
    \begin{equation}\label{est perimeter}
        P(E_{s_0})-P(E_{s_0}^\infty) \le 2C_6(s^{\alpha} + \epsilon_0 + \epsilon_0/s).
    \end{equation}
    Since $E_{s_0}$ is convex, the result of Barchiesi, Cagnetti, and Fusco \cite{Barchiesi2013} gives a constant $C_7=C_7(N)$ such that
    \begin{equation}\label{steiner stab}
        \inf\bigl\{ \frac{|E_{s_0}\Delta (E_{s_0}^{\infty}+(0,y))|}{|E_{s_0}|}:y\in\R\bigr\} \le C_7 \mathcal{E}^2 \sqrt{\frac{P(E_{s_0})-P(E_{s_0}^{\infty})}{P(E_{s_0}^\infty)}},
    \end{equation}
    where $\mathcal{E}$ denotes the ratio of the circumradius to the inradius of $E_{s_0}$.

    Now we connect this estimate with $\Omega$. By Corollary \ref{Cor1} (2)(3),
    \begin{align*}
        |\Omega\backslash E_{s_0}| = |\{0< u\le s_0\}| ={}& \int_0^{s_0}\int_{\{u=y\}} |\nabla u|^{-1}\,d\mathcal{H}^{N-1}(x)\,dy\\
        \le{}& (2/\mathbf{c})^{p}\int_0^{s_0} \int_{\{u=y\}}|\nabla u|^{p-1} \,d\mathcal{H}^{N-1}(x)\,dy \\
        \le{}& (2/\mathbf{c})^{p} C_2s.
    \end{align*}
    Therefore, for any $(0,y)\in \R^{N-1}\times \R$,
    \begin{align*}
        |\Omega\Delta (\Omega^{\infty}+(0,y))|={}& 2|\Omega\backslash (\Omega^\infty+(0,y))| \\
        \le{}& 2|\Omega\backslash E_{s_0}|+2|E_{s_0}\backslash (E_{s_0}^\infty + (0,y))| \\
        ={}& 2|\Omega\backslash E_{s_0}|+ |E_{s_0}\Delta (E_{s_0}^\infty + (0,y))|.
    \end{align*}
    Optimizing over all $(0,y)$ and using \eqref{est perimeter},
    \begin{equation}
        \inf_{y\in\R} |\Omega\Delta (\Omega^{\infty}+(0,y))| \le \sqrt{2}C_6^{1/2}C_7\mathcal{E}^2 |E_{s_0}|\sqrt{\frac{s^{\alpha} + \epsilon_0 + \epsilon_0/s}{P(E_{s_0}^\infty)}} + 2(2/\mathbf{c})^{p}C_2s.
    \end{equation}
    Moreover, the isoperimetric inequality gives a bound for $\frac{|E_{s_0}|}{\sqrt{P(E_{s_0}^\infty)}}$:
    \begin{equation*}
        \begin{gathered}
            P(E_{s_0}^\infty) \ge N w_N^{1/N} |E_{s_0}^\infty|^{(N-1)/N} = N w_N^{1/N} |E_{s_0}|^{(N-1)/N} \\
            \implies \frac{|E_{s_0}|}{\sqrt{P(E_{s_0}^\infty)}} \le |\Omega|^{\frac{N+1}{2N}}(N w_N^{1/N})^{-1/2}
        \end{gathered}
    \end{equation*}
    and so when $s<1$,
    \begin{equation}\label{est Omega}
        \inf_{y\in\R} |\Omega\Delta (\Omega^{\infty}+(0,y))| \le C_8 \mathcal{E}^2 ( \epsilon_0^{1/2}/s^{1/2} + s^{\alpha/2}) +2(2/\mathbf{c})^{p}C_2s^{\alpha/2},
    \end{equation}
    \[ C_8 = 2C_6^{1/2}C_7 |\Omega|^{\frac{N+1}{2N}}(Nw_N^{1/N})^{-1/2}.\]
    Denote the circumradius and inradius of $\Omega$ by $\rho_e$ and $\rho_i$, respectively. Then $\rho_e\le \diam(\Omega)$. By Corollary \ref{Cor1}(2), $\Omega_{2s_0/\mathbf{c}}\subset E_{s_0}$, and hence
    \[ \mathcal{E}\le \frac{\rho_e}{\rho_i - 2s_0/\mathbf{c}}\le 2\rho_e/\rho_i \le 2\diam(\Omega)/\rho_i\]
    provided $\rho_i \ge 4s_0/\mathbf{c}$.

    Choose
    \[s = \min\bigl\{s_*, \bigl(\frac{\mathbf{c}}{4L}\bigr)^{1/\alpha}\frac{\mathbf{c}}{2}, \frac{\rho_i\mathbf{c}}{4}, 1\bigr\}\epsilon_0^{\frac{1}{\alpha+1}}.\]
    Since $\epsilon_0<\min\{1,\mathbf{c}/4\}$, the hypotheses needed to apply Corollary \ref{Cor1} are satisfied. Moreover,
    \[s_0\le s \le \rho_i\mathbf{c}/4,\quad s<1,\] 
    and so \eqref{est Omega} implies
    \begin{equation*}
        \inf_{y\in\R} |\Omega\Delta (\Omega^{\infty}+(0,y))| \le C_9\epsilon_0^{\frac{\alpha}{2(\alpha+1)}},
    \end{equation*}
    where
    \[ C_9 = C_8\bigl(\frac{2\diam(\Omega)}{\rho_i}\bigr)^2\Big( A^{-1/2} + A^{\alpha/2}\Big) + 2(2/\mathbf{c})^{p}C_2A^{\alpha/2},\]
    and
    \[A=\min\bigl\{s_*, \bigl(\frac{\mathbf{c}}{4L}\bigr)^{1/\alpha}\frac{\mathbf{c}}{2}, \frac{\rho_i\mathbf{c}}{4}, 1\bigr\}.\]
    
    When $\epsilon_0 < \min\{1,\mathbf{c}/4\}$, checking all the constants, we see that $C_9$ depends only on $\diam(\Omega)$, $\rho_i$, $N$, $p$, $\mathbf{c}$, $C_0$, $C_1$, $L$, $\alpha$, $s_*$.
\end{proof}

The next step is standard. If $\Omega$ is ``nearly'' symmetric with respect to a.e. direction of reflection, then it is ``nearly'' centrally symmetric.

\begin{proof}[Proof of Theorem \ref{mainthm}]
    Throughout this proof, $C_0:=\lVert u\rVert_{C^1(\bar\Omega)}$ and $C_1:=\lVert f\rVert_{C^0([0,C_0])}$.
    For every direction $\tau\in\mathbb{S}^{N-1}$, denote by $\Omega^\tau$ the Steiner symmetrization with respect to this direction. Since $\Omega$ is bounded, by Theorem \ref{thm1}, for a.e. $\tau\in\mathbb{S}^{N-1}$, there exists $a_\tau \in\R$ such that
    \[ |\Omega \Delta (\Omega^{\tau}+a_\tau \tau)| \le C_9\epsilon_0^{\frac{\alpha}{2(\alpha+1)}} . \]
    $\Omega^\tau+a_\tau \tau$ is symmetric with respect to the hyperplane
    \[ H_\tau := \{ x\in\R^N: x\cdot \tau = a_\tau\}.\]
    Let $P_{\tau,a_\tau}$ be the reflection across $H_\tau$. Then $P_{\tau,a_\tau}(\Omega^\tau+a_\tau \tau) = \Omega^\tau+a_\tau \tau$, and hence
    \begin{align}
        \delta:=|\Omega \Delta P_{\tau,a_\tau}\Omega| \le{}& |\Omega\Delta (\Omega^\tau + a_\tau \tau)| + |P_{\tau,a_\tau}\Omega \Delta(\Omega^\tau + a_\tau \tau)| \notag \\
        ={}&  |\Omega\Delta (\Omega^\tau + a_\tau \tau)| + |P_{\tau,a_\tau}\Omega\Delta  P_{\tau,a_\tau}(\Omega^\tau + a_\tau \tau)| \notag \\
        ={}& 2 |\Omega\Delta (\Omega^\tau + a_\tau \tau)| \notag \\
        \le{}& 2 C_9\epsilon_0^{\frac{\alpha}{2(\alpha+1)}} . \label{est o1}
    \end{align}
    Let $x_\Omega$ be the barycenter of $\Omega$:
    \[ x_\Omega := \frac{1}{|\Omega|}\int_\Omega x\,dx.\]
    Up to a translation, we may assume $x_\Omega=0$. Then the barycenter of $P_{\tau,a_\tau}\Omega$ is $P_{\tau,a_\tau}(0) = 2a_\tau\tau$.

    If $\delta \ge |\Omega|$, then \eqref{main est} is immediate because
    \[ |\Omega \Delta B_\Omega(0)| \le 2|\Omega| \le 2\delta \le 4C_9 \epsilon_0^{\frac{\alpha}{2(\alpha+1)}}.\]
    Otherwise, if $\delta<|\Omega|$, the two sets intersect. Choose $z\in \Omega\cap P_{\tau,a_\tau}\Omega$. Then both $\Omega$ and $P_{\tau,a_\tau}\Omega$ lie in $B_{\diam(\Omega)}(z)$. Thus,
    \begin{equation*}
        \begin{aligned}
            2|a_\tau||\Omega| ={}& \bigl|\int_\Omega x\,dx - \int_{P_{\tau,a_\tau}\Omega} x\,dx\bigr|\\
            ={}& \bigl|\int_\Omega (x-z)\,dx - \int_{P_{\tau,a_\tau}\Omega} (x-z)\,dx\bigr|\\
            \le{}& \diam(\Omega)|\Omega\Delta P_{\tau,a_\tau}\Omega|\\
            \le{}& 2\diam(\Omega) C_9\epsilon_0^{\frac{\alpha}{2(\alpha+1)}}.
        \end{aligned}
    \end{equation*}
    Therefore,
    \begin{equation}\label{est atau}
        |a_\tau| \le 2C_9 |\Omega|^{-1} \diam(\Omega)\epsilon_0^{\frac{\alpha}{2(\alpha+1)}} .
    \end{equation}

    Let $Q_{\tau}$ be the reflection across $\{x\in\R^N: x\cdot \tau = 0\}$. Since $Q_{\tau}\Omega = P_{\tau,a_\tau}\Omega - 2a_\tau \tau$, the standard finite-perimeter estimate $|E\Delta(E+h)|\le |h|P(E)$ gives
    \begin{align*}
        |Q_{\tau}\Omega\Delta P_{\tau,a_\tau}\Omega| \le 2|a_\tau| P(\Omega) \le 2|a_\tau|(2/\mathbf{c})^{p-1} C_2,
    \end{align*}
    here we use the fact that
    \[ P(\Omega)\le \liminf_{y\to 0} P(E_y) \le (2/\mathbf{c})^{p-1}C_2.\]
    Then, by \eqref{est o1} and \eqref{est atau}, we obtain that
    \begin{align}
        |\Omega \Delta Q_{\tau}\Omega| \le{}& |\Omega\Delta P_{\tau,a_\tau}\Omega| + |Q_{\tau}\Omega\Delta P_{\tau,a_\tau}\Omega| \notag \\
        \le{}& 2C_9(1 + 2(2/\mathbf{c})^{p-1}C_2 |\Omega|^{-1}\diam(\Omega))\epsilon_0^{\frac{\alpha}{2(\alpha+1)}}. \label{est o2}
    \end{align}
    For arbitrary $\tau\in\mathbb{S}^{N-1}$, choose $\tau_k\in A_u$ such that $\tau_k \to \tau$. Then $Q_{\tau_k}\to Q_\tau$ as orthogonal matrices. Since the action of $O(N)$ is strongly continuous in $L^1(\R^N)$, we immediately obtain that $\chi_{Q_{\tau_k}\Omega}\to \chi_{Q_\tau\Omega}$ in $L^1(\R^N)$. Consequently, \eqref{est o2} holds for every direction $\tau\in\mathbb{S}^{N-1}$.
    
    Since any rotation $R\in SO(N)$ can be written as at most $N$ reflections and $C_2=C_1|\Omega|$, we have
    \begin{equation}
        |\Omega\Delta R(\Omega)| \le 2NC_9(1 + 2(2/\mathbf{c})^{p-1}C_1\diam(\Omega))\epsilon_0^{\frac{\alpha}{2(\alpha+1)}}.
    \end{equation}

    We now return to the proof of the main theorem. Since $\Omega = \{u>0\}$ is convex and $0$ is its barycenter, $\Omega$ is star-shaped with respect to the origin. Let $\rho:\mathbb{S}^{N-1}\to (0,\infty)$ be its radial function:
    \[ \rho(\theta) = \sup\{r>0:r\theta\in\Omega\},\]
    then
    \[ |\Omega| = \frac{1}{N} \int_{\mathbb{S}^{N-1}}\rho(\theta)^N\,d\theta,\]
    \[ |\Omega\Delta R(\Omega)| = \frac 1N \int_{\mathbb{S}^{N-1}}|\rho(\theta)^N - \rho(R\theta)^N|\,d\theta,\]
    implying that
    \begin{align}
        \int_{\mathbb{S}^{N-1}}|\rho(\theta)^N - \rho(R\theta)^N|\,d\theta \le{}& N |\Omega\Delta R(\Omega)| \notag \\
        \le{}& 2N^2C_9(1 + 2(2/\mathbf{c})^{p-1}C_1\diam(\Omega))\epsilon_0^{\frac{\alpha}{2(\alpha+1)}}.\label{rho est1}
    \end{align}
    Let $\mu_0$ denote the normalized Haar measure on $SO(N)$, and set
    \[ \Phi(\theta):=\rho(\theta)^N,\quad \bar{\Phi}:=\frac{1}{|\mathbb{S}^{N-1}|} \int_{\mathbb{S}^{N-1}}\Phi(\theta)\,d\theta=:r_0^N. \]
    For every fixed $\theta\in\mathbb{S}^{N-1}$, the map $R\mapsto R\theta$ is the normalized surface measure on $\mathbb{S}^{N-1}$. Hence,
    \[ \bar{\Phi} = \int_{SO(N)} \Phi(R\theta)\,d\mu_0(R).\]
    Therefore,
    \begin{align*}
        |\Phi(\theta)-\bar{\Phi}| ={}& \bigl|\Phi(\theta)- \int_{SO(N)} \Phi(R\theta)\,d\mu_0(R) \bigr| \\
        ={}& \bigl|\int_{SO(N)}  (\Phi(\theta) - \Phi(R\theta))\,d\mu_0(R) \bigr|\\
        \le{}& \int_{SO(N)} |\Phi(\theta)-\Phi(R\theta)|\,d\mu_0(R).
    \end{align*}
    Integrating with respect to $\theta$ and using \eqref{rho est1} yields
    \begin{align*}
        \int_{\mathbb{S}^{N-1}}|\rho(\theta)^N - r_0^N|\,d\theta ={}& \int_{\mathbb{S}^{N-1}}|\Phi(\theta)-\bar{\Phi}|\,d\theta \\
        \le{}& \int_{SO(N)}\int_{\mathbb{S}^{N-1}} |\Phi(\theta)-\Phi(R\theta)|\,d\theta\,d\mu_0(R) \\
        \le{}& 2N^2C_9(1 + 2(2/\mathbf{c})^{p-1}C_1\diam(\Omega))\epsilon_0^{\frac{\alpha}{2(\alpha+1)}}.
    \end{align*}
    By the definition of $r_0$, it is exactly the radius of the ball with the same volume as $\Omega$. Hence,
    \[ |\Omega\Delta B_{r_0}(0)| = \frac{1}{N} \int_{\mathbb{S}^{N-1}}|\rho(\theta)^N - r_0^N|\,d\theta \le C_9C'\epsilon_0^{\frac{\alpha}{2(\alpha+1)}}, \]
    with $C'=2N(1 + 2(2/\mathbf{c})^{p-1}C_1\diam(\Omega))$, and this finishes the proof.
\end{proof}

\section{Applications to Elliptic Equations}\label{sec4}
\subsection{The \texorpdfstring{$p$}{p}-Laplacian}
Under additional assumptions on $\Omega$ and $f$, regularity and geometric arguments yield conditions (b)--(d). More precisely, Lemma \ref{regular1} gives the $C^{1,\alpha}$ regularity required in condition (b), while condition (d) follows from the assumptions on the domain. There are two ways to verify condition (c). Under suitable structural assumptions on $f$, Lemma \ref{concave} implies that $\log u$ is concave and hence that the superlevel sets of $u$ are convex. Alternatively, without imposing these structural assumptions on $f$, Lemmas \ref{C^{2,beta}} and \ref{Lem convex} show that the superlevel sets near zero are convex, provided the domain is sufficiently regular and uniformly convex. These observations yield Corollary \ref{maincor1}.

We first introduce notation that quantifies the regularity of $\partial\Omega$.
\begin{definition}\label{character}
    Suppose $\partial\Omega$ is of class $C^{k,\alpha}$. Then there are constants $R_\Omega,M_\Omega>0$ such that, for every $x_0\in\partial\Omega$, one can choose coordinates that send $x_0$ to the origin and $\nu(x_0)$ to the positive $x_N$-axis, in which a local defining function $\varphi_{x_0}$ satisfies
    \[ \Omega\cap Q_{R_\Omega} = \{ (x',y)\in  Q_{R_\Omega}: y>\varphi_{x_0}(x')\}\]
    with 
    \[ \varphi_{x_0} \in C^{k,\alpha}(B_{R_\Omega}'),\quad \varphi_{x_0}(0)=0,\quad \nabla\varphi_{x_0}(0)= 0\]
    and
    \[ \lVert \varphi_{x_0}\rVert_{C^{k,\alpha}(B_{R_\Omega}')}\le M_{\Omega}.\]
    Here $B_R'$ denotes the ball of radius $R$ in $\R^{N-1}$ and $Q_{R}=B_{R}'\times(-R,R)$. We call
    \[ \mathcal{C}_{k,\alpha}(\Omega) = (R_\Omega,M_{\Omega}) \]
    a $C^{k,\alpha}$ character of $\Omega$.
\end{definition}

\begin{lemma}\label{regular1}
    Assume $\Omega$ is of class $C^{1,r}$ and condition (a) holds. Set $C_0:=\lVert u\rVert_{C^1(\bar\Omega)}$ and $C_1:=\lVert f\rVert_{C^0([0,C_0])}$. Then there exist $\alpha>0$ and a constant $c_1$ depending only on $N$, $p$, $C_0$, $C_1$, $\diam(\Omega)$, and $\mathcal{C}_{1,r}(\Omega)$ such that
    \begin{equation}\label{C^{1,alpha}}
        |\nabla u(x) - \nabla u(y)| \le c_1|x-y|^{\alpha}, \quad \forall\, x,y\in\Omega.
    \end{equation}
\end{lemma}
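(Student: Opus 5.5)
This is a boundary regularity result for the $p$-Laplacian with bounded right-hand side. The plan is to reduce to the $C^{1,\alpha}$ estimates of Lieberman (Nonlinear Anal.\ 1988), valid up to the boundary, together with the interior estimates of DiBenedetto and Tolksdorf.

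First, set $F(x):=f(u(x))$. Since $0\le u\le C_0$ in $\bar\Omega$ by condition (a) and the positivity established in Corollary \ref{Cor1}, we have $\lVert F\rVert_{L^\infty(\Omega)}\le C_1$. Thus $u$ is a weak solution of $-\Delta_p u=F$ in $\Omega$ with $u=0$ on $\partial\Omega$. The vector field $A(\xi)=|\xi|^{p-2}\xi$ satisfies Lieberman's structure conditions
\[
\lambda|\xi|^{p-2}|\eta|^2\le \partial_{\xi_j}A^i(\xi)\eta_i\eta_j,\qquad |\partial_\xi A(\xi)|\le\Lambda|\xi|^{p-2},
\]
with $\lambda=\min\{1,p-1\}$ and $\Lambda=\max\{1,p-1\}$. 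It has no $x$- or $u$-dependence, and the lower-order term $F$ is bounded by $C_1$. The boundary $\partial\Omega$ is $C^{1,r}$, its character is $\mathcal{C}_{1,r}(\Omega)$, and the boundary datum is zero.

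Next, apply Lieberman's theorem (Theorem 1 there). It gives $\alpha=\alpha(N,p,r)\in(0,1)$ and a bound
\[
\lVert u\rVert_{C^{1,\alpha}(\bar\Omega)}\le c_1,
\]
where $c_1$ depends on $N$, $p$, $r$, $\lVert u\rVert_{L^\infty}\le C_0$, $C_1$, and the $C^{1,r}$ character of $\partial\Omega$. This is exactly \eqref{C^{1,alpha}}. Concretely, the argument runs as follows. One flattens the boundary locally through the charts $\varphi_{x_0}$ of Definition \ref{character}, on cubes $Q_{R_\Omega}$. One then obtains uniform $C^{1,\alpha}$ bounds on half-cubes of a fixed size depending on $R_\Omega$ and $M_\Omega$. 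Interior $C^{1,\alpha}$ bounds hold on $\Omega_{R_\Omega/2}$ by DiBenedetto and Tolksdorf.

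Finally, the local estimates must be patched into a global one. If $|x-y|$ is smaller than a fixed fraction of $R_\Omega$, both points lie in a single chart or in a single interior ball, and the local estimate applies directly. Otherwise we use
\[
|\nabla u(x)-\nabla u(y)|\le 2C_0\le 2C_0\left(\frac{|x-y|}{cR_\Omega}\right)^{\alpha}.
\]
Since the number of charts needed is controlled by $\diam(\Omega)$ and $R_\Omega$, this explains why $\diam(\Omega)$ enters the constant.

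The main obstacle is bookkeeping rather than analysis. One has to check that the dependence of Lieberman's constants reduces to the listed quantities. In particular, the flattening map must have $C^{1,r}$ norm controlled by $M_\Omega$, and the transformed operator must still satisfy the structure conditions with constants depending only on $N$, $p$, and $M_\Omega$. Both follow because the flattening is a $C^{1,r}$ diffeomorphism with Jacobian close to the identity on small cubes.
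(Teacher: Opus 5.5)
Your proposal is correct and is essentially the paper's argument: the paper's proof simply cites the up-to-the-boundary $C^{1,\alpha}$ estimates of Tolksdorf and Lieberman, using that the $p$-Laplacian satisfies the structure conditions and that $|f(u)|\le C_1$. Your flattening-and-patching paragraph only makes explicit the constant bookkeeping that the paper leaves implicit. One small fix: obtain $u\ge 0$ directly from Lemma \ref{MP} applied to $-u$, not from Corollary \ref{Cor1}, since that corollary assumes condition (b), which is exactly what this lemma supplies.
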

\begin{proof}
    This follows directly from the results of Tolksdorf \cite{Tolksdorf1984}, Lieberman \cite{Lieberman1988}, because $\Omega$ is bounded with $C^{1,r}$ boundary, the $p$-Laplacian satisfies the required structural conditions, and $|f(u)|\le C_1$.
\end{proof}

We first record the quasiconcavity criterion of Borrelli, Mosconi, and Squassina \cite{Borrelli2024a}.

\begin{lemma}[{\cite[Theorem 1.1]{Borrelli2024a}}] \label{concave}
    Let $\Omega\subset \R^N$ be a bounded convex domain with $C^2$ boundary, let $f:\R_+ \to [0,\infty)$ be H\"{o}lder continuous with
    \[ M=\inf \{t>0:f(t)=0\},\]
    where $M=+\infty$ if $f>0$ for all $t>0$. Let $u\in W_0^{1,p}(\Omega)$ solve \eqref{Eq1}. If
   \[ x\mapsto \frac{f(x)}{x^{p-1}} \text{ is nonincreasing},\quad x\mapsto \frac{ e^{(p-1)x} }{ f(e^x) } \text{ is convex on $(-\infty,\log M)$},\]
    then $\log u$ is concave.
\end{lemma}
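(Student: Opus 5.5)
Since this is \cite[Theorem 1.1]{Borrelli2024a}, I would reproduce its proof. The plan is a concavity maximum principle of Korevaar--Kennington type, applied to $w:=\log u$, which is well defined because $u>0$ in $\Omega$ by Lemma \ref{MP} and the strong maximum principle.

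\textbf{Step 1 (equation for $w$).} Where $u=e^w$ is smooth and $\nabla u\neq0$, we have $\nabla u=e^w\nabla w$ and $\Delta_p u=e^{(p-1)w}\bigl(\Delta_p w+(p-1)|\nabla w|^p\bigr)$. Hence
\[ -\Delta_p w=(p-1)|\nabla w|^p+g(w),\qquad g(t):=\frac{f(e^t)}{e^{(p-1)t}}. \]
The two hypotheses translate exactly into properties of $g$ on $(-\infty,\log M)$:
\begin{enumerate}[(i)]
\item $g$ is nonincreasing, since $x\mapsto f(x)/x^{p-1}$ is nonincreasing;
\item $1/g$ is convex, i.e.\ $g$ is harmonic concave.
\end{enumerate}
Also $w\to-\infty$ at $\partial\Omega$.

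\textbf{Step 2 (regularization).} The $p$-Laplacian is degenerate and $f$ is only H\"older, so I would replace the operator by $\Div\bigl((\varepsilon^2+|\nabla u|^2)^{(p-2)/2}\nabla u\bigr)$. I would also replace $f$ by smooth positive $f_\varepsilon$ that keep (i) and (ii), for instance by mollifying in the variable $t=\log x$ and adding a small multiple of $x^{p-1}$-type terms. The approximate solutions $u_\varepsilon$ are then classical and $C^{2,\beta}$ up to the boundary. The uniform $C^{1,\alpha}$ bounds of Lemma \ref{regular1} give $u_\varepsilon\to\tilde u$ in $C^1(\bar\Omega)$ for some solution $\tilde u$ of \eqref{Eq1}. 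To conclude $\tilde u=u$ I would use the uniqueness of positive solutions when $f(x)/x^{p-1}$ is nonincreasing (a D\'iaz--Saa type argument). Since concavity is preserved under pointwise limits, it then suffices to prove that each $\log u_\varepsilon$ is concave.

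\textbf{Step 3 (concavity maximum principle).} For the regularized problem, consider
\[ \mathcal C(x,y,\lambda)=w(\lambda x+(1-\lambda)y)-\lambda w(x)-(1-\lambda)w(y) \quad\text{on } \bar\Omega\times\bar\Omega\times[0,1]. \]
Suppose $\mathcal C$ has a negative minimum. First I would exclude minima near the boundary. Because $w\to-\infty$ at $\partial\Omega$, and Hopf's lemma together with the convexity and $C^2$ regularity of $\Omega$ give the standard Korevaar boundary estimate ($\partial_\nu w\to+\infty$ uniformly), the minimum must be attained at an interior triple with $x\neq y$.

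At such a triple, the first-order conditions force $\nabla w$ to be equal at the three points $x$, $y$ and $z=\lambda x+(1-\lambda)y$. The second-order conditions, via Kennington's matrix inequality, then compare the (equal-gradient) quasilinear operators at the three points. This yields
\[ g(w(z))\ge \text{(harmonic combination of } g(w(x)),\,g(w(y))). \]
On the other hand, $w(z)<\lambda w(x)+(1-\lambda)w(y)$, and (i)--(ii) imply the reverse strict inequality. This contradiction proves that $\log u_\varepsilon$ is concave.

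\textbf{Main obstacle.} I expect the hardest part to be Step 2, rather than Step 3, which follows the standard Kennington/Korevaar computation. The issue is constructing $f_\varepsilon$ that keeps both structural conditions exactly while making the problem nondegenerate, and then identifying the limit with the given $u$ through uniqueness. A secondary difficulty is the boundary analysis in Step 3, where the first thing I would try is Korevaar's argument with the $C^2$ convex boundary and the uniform Hopf bound.
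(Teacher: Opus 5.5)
The paper does not prove this lemma; it imports \cite[Theorem 1.1]{Borrelli2024a}. Your reconstruction has a genuine gap, and it is in Step 3, not Step 2.

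Take an interior negative minimum of $\mathcal C$ with common gradient $\xi\neq0$. Set $c=(p-1)|\xi|^p$ and $B(t)=c+g(t)$, so that $\Delta_p w=-B(w)$ at $x,y,z$. Test the second-order condition with $h=\alpha\eta$, $k=\beta\eta$ and trace against the common coefficient matrix of $\Delta_p$ at $\xi$. This gives
\[ (\lambda\alpha+(1-\lambda)\beta)^2B(w(z))\le\lambda\alpha^2B(w(x))+(1-\lambda)\beta^2B(w(y))\quad\text{for all }\alpha,\beta, \]
that is, $B(w(z))\le\bigl(\lambda/B(w(x))+(1-\lambda)/B(w(y))\bigr)^{-1}$. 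Two things differ from what you wrote:
\begin{enumerate}[(1)]
\item This is an \emph{upper} bound.
\item It concerns $c+g$, not $g$. The term $(p-1)|\nabla w|^p$ cancels only for $\alpha=\beta$, which gives the arithmetic mean and would require $g$ to be concave.
\end{enumerate}
Hypotheses (i)--(ii) give only the lower bound $g(w(z))\ge g(\bar w)\ge$ (harmonic mean of $g(w(x)),g(w(y))$), where $\bar w=\lambda w(x)+(1-\lambda)w(y)$. As you wrote it, both inequalities point the same way, so nothing is contradicted.

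With the correct direction, a contradiction requires $t\mapsto 1/(c+g(t))$ to be convex for every $c\ge0$. This does not follow from convexity of $1/g$: the harmonic mean is superadditive, so harmonic concavity is not stable under adding a constant. Torsion $f\equiv1$ satisfies both hypotheses, yet $g(t)=e^{-(p-1)t}$ and, with $s=e^{(p-1)t}$,
\[ \frac{d^2}{dt^2}\,\frac{1}{c+g(t)}=\frac{(p-1)^2\,s\,(1-cs)}{(1+cs)^3}. \]
This is negative when $cs>1$. At a point with $t=w$ and $\xi=\nabla w$, that condition reads $(p-1)|\nabla u|^p>u$, which holds near $\partial\Omega$. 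The pointwise second-order information cannot be improved beyond the harmonic-mean bound, so the argument on $\log u$ cannot close for such $f$.

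This is why, for torsion, one proves concavity of $v=u^{(p-1)/p}$ instead. Its equation $-\Delta_p v=v^{-1}\bigl(|\nabla v|^p+(\tfrac{p-1}{p})^{p-1}\bigr)$ has the gradient term entering multiplicatively, so Kennington's argument closes, and log-concavity follows afterwards. Your plan supplies no device of this kind for general $f$, and this is where the real work lies.

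A second problem links Steps 2 and 3. The regularized operator $\Div\bigl((\varepsilon^2+|\nabla u|^2)^{(p-2)/2}\nabla u\bigr)$ is not homogeneous. For $u_\varepsilon=e^{w}$, the normalized principal part becomes $\delta_{ij}+(p-2)\frac{e^{2w}w_iw_j}{\varepsilon^2+e^{2w}|\nabla w|^2}$, which depends on $w$ and not only on $\nabla w$. At $x,y,z$ the gradients agree but the values of $w$ do not. So for $p\neq2$ there is no common matrix to trace the second-order condition against, and the concavity principle cannot be run on $\log u_\varepsilon$ as you set it up.
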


\begin{lemma}\label{C^{2,beta}}
    Let $s\in (0,\lVert u\rVert_{L^\infty(\Omega)}]$. Suppose $f\in C^\beta([0,s])$ and $\Omega$ is of class $C^{2,\beta}$. If $|\nabla u|>m>0$ in $\{0\le u \le s\}$, then for every $s_1<s$, $u\in C^{2,\eta}(\{0\le u\le s_1\})$ with some $\eta \le \beta$.
\end{lemma}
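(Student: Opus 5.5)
The idea is that near the boundary the gradient does not vanish, so the $p$-Laplacian is uniformly elliptic there. Linear Schauder theory then upgrades $u$ to $C^{2,\eta}$. I would work in the region $U:=\{0\le u<s\}$, which contains a neighbourhood of $\partial\Omega$ relative to $\bar\Omega$ and on which $|\nabla u|>m$.

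\textbf{Step 1: $C^{1,\alpha}$ regularity and a smooth structure field.} By Lemma \ref{regular1}, $u\in C^{1,\alpha}(\bar\Omega)$ for some $\alpha>0$. Fix $s_1<s_2<s$. Since $|\nabla u|>m$ on $\{0\le u\le s_2\}$, we may write
\[a(\xi)=|\xi|^{p-2}\xi,\]
which is smooth on $\{|\xi|>m/2\}$. Its Jacobian $Da(\xi)=|\xi|^{p-2}\bigl(I+(p-2)\frac{\xi\otimes\xi}{|\xi|^2}\bigr)$ has eigenvalues between $\min\{1,p-1\}m^{p-2}$ and $\max\{1,p-1\}C_0^{p-2}$ (suitably interpreted when $p<2$). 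So the equation is uniformly elliptic on the relevant range of gradients.

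\textbf{Step 2: $W^{2,2}$ via difference quotients.} I would first show $u\in W^{2,2}_{loc}$ up to the boundary in $\{0\le u<s_2\}$, using the standard Nirenberg difference-quotient argument. This is applied to $\Div a(\nabla u)=-f(u)$. Interior and tangential quotients are controlled because $a$ is uniformly monotone on the range of $\nabla u$. At the boundary I would flatten with a $C^{2,\beta}$ chart, which preserves the structure. The normal second derivatives are then recovered from the equation itself. Differentiating, $w=\partial_k u$ solves weakly the linear divergence-form equation
\[\partial_i\bigl(A_{ij}(x)\partial_j w\bigr)=-\partial_k\bigl(f(u)\bigr),\qquad A_{ij}(x)=\partial_{\xi_j}a_i(\nabla u(x)).\]
Here $A_{ij}\in C^{\alpha}$, since $\nabla u\in C^\alpha$ and $Da$ is smooth on the gradient range. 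Alternatively, the equation can be written in nondivergence form,
\[A_{ij}(x)\,\partial_{ij}u=-f(u),\]
with $C^{\alpha}$ coefficients and right-hand side $f(u)\in C^{\beta'}$, where $\beta'=\min\{\beta,\alpha\beta\}$ or simply $\beta$, since $u$ is Lipschitz.

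\textbf{Step 3: Schauder estimates.} The boundary condition is $u=0$ on the $C^{2,\beta}$ boundary. Global or boundary Schauder estimates for nondivergence equations (Gilbarg--Trudinger, Theorems 6.2 and 6.6, in their local boundary form), applied with a cutoff supported in $\{u<s_2\}$, give $u\in C^{2,\eta}$ near $\partial\Omega$ with $\eta=\min\{\alpha,\beta\}\le\beta$. Interior points of $\{u\le s_1\}$ are handled by interior Schauder estimates. The cutoff region must stay inside $\{|\nabla u|>m\}$, and this is guaranteed because $\{u\le s_1\}$ is compactly separated from $\{u\ge s_2\}$ by continuity of $u$.

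\textbf{Main obstacle.} The delicate point is justifying the nondivergence form, i.e.\ Step 2 ($W^{2,2}$, or directly $C^{2,\eta}$), up to the boundary for a quasilinear equation. I would try to bypass it in one of two ways. The first is to cite Lieberman's or Tolksdorf's result that $C^{1,\alpha}$ solutions with nonvanishing gradient of smooth-structure equations are $C^{2,\eta}$. The second is to freeze coefficients, treating $u$ as the unique $W^{1,2}$ solution of the linear problem
\[\Div\bigl(B(x)\nabla v\bigr)=-f(u),\qquad B(x)=|\nabla u(x)|^{p-2}I,\]
with $B\in C^{\alpha}$. Divergence-form Schauder theory then gives $u\in C^{1,\alpha'}$ repeatedly, but that gives no gain beyond $C^{1,\alpha}$. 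So the difference-quotient step is genuinely needed, and I expect it to be the part requiring the most care.
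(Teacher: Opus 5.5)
Your proposal is correct and follows the same route as the paper. On the region where $|\nabla u|>m$, the equation is rewritten in nondivergence form with uniformly elliptic coefficients that are $C^\alpha$ by Lemma \ref{regular1}, and boundary Schauder estimates on the $C^{2,\beta}$ boundary give $u\in C^{2,\eta}$ with $\eta=\min\{\alpha,\beta\}$. The only difference is your difference-quotient $W^{2,2}$ step, which justifies passing to the nondivergence form (the paper takes this for granted), so your version is if anything more complete.
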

\begin{proof}
    Throughout this proof, $C_0:=\lVert u\rVert_{C^1(\bar\Omega)}$ and $C_1:=\lVert f\rVert_{C^0([0,C_0])}$.
    At points where $|\nabla u|\neq 0$, equation \eqref{Eq1} can be written as
    \begin{equation}\label{Eq2}
        -\sum_{i,j=1}^N \bigl(\delta_{ij}+(p-2)\frac{u_iu_j}{|\nabla u|^2}\bigr)u_{ij} = f(u) |\nabla u|^{2-p}.
    \end{equation}
    The matrix $A_{ij}(x):= \delta_{ij}+(p-2)\frac{u_iu_j}{|\nabla u|^2}$ has eigenvalues $1,p-1$ and is uniformly elliptic as long as $1<p<\infty$. By Lemma \ref{C^{1,alpha}}, $u\in C^{1,\alpha}(\Omega)\cap C^1(\bar\Omega)$ for some $\alpha\in(0,1]$ depending on $N$, $p$, $C_0$, $C_1$, $\diam(\Omega)$ and $\mathcal{C}_{2,\beta}(\Omega)$, and so $\nabla u\in C^\alpha(\bar\Omega)$. Hence
    \[ A_{ij} \in C^\alpha,\quad f(u)|\nabla u|^{2-p}\in C^\eta,\quad \text{in $\{|\nabla u|\neq 0\}$}\]
    with $\eta = \min\{\alpha,\beta\}$. Since $\Omega\in C^{2,\beta}$ and $u=0$ on $\partial\Omega$, the Schauder estimates imply that
    \[ u\in C^{2,\eta}(\{0\le u\le s_1\}),\quad \forall\, 0<s_1<s.\]
    More precisely, the $C^{2,\eta}$-norm of $u$ can be controlled by a constant depending only on $N$,$p$, $m$, $\lVert f\rVert_{C^\beta([0,s])}$, $\lVert u\rVert_{C^{1,\alpha}(\bar\Omega)}$, $\diam(\Omega)$ and $\mathcal{C}_{2,\beta}(\Omega)=(R_\Omega,M_\Omega)$:
    \[ \lVert u\rVert_{C^{2,\eta}(\{0\le u\le s_1\})} \le C(N,p,m,R_\Omega,M_\Omega) (\lVert u\rVert_{L^\infty(\Omega)}+ \lVert f(u)|\nabla u|^{2-p}\rVert_{C^{\eta}(\{0\le u < s\})}).\]
    This estimate is uniform for any $0<s_1<s$.
\end{proof}

\begin{lemma}\label{Lem convex}
    Suppose that $\Omega$ is uniformly convex and $\epsilon_0<\mathbf{c}/2$. Under the assumptions of Lemma \ref{C^{2,beta}}, let $c_1 = \lVert u\rVert_{C^{1,\alpha}(\bar\Omega)}$, $c_2(s) =\sup\limits_{0<s_1<s}\lVert u\rVert_{C^{2,\eta}(\{0\le u\le s_1\})}$, and
    \begin{equation}\label{kappa}
        \kappa_u(x) := \min_{\tau\perp \nabla u(x),\, |\tau|=1} \frac{-D^2u(x)[\tau,\tau]}{|\nabla u(x)|},
    \end{equation}
    then
    \begin{equation}\label{convex}
        \kappa_u(x) \ge \kappa_0 - C_4 d(x,\partial\Omega)^\mu,\quad \forall\, x\in\{0\le u < \min\big\{s, \frac{\mathbf{c}/2-\epsilon_0}{c_2(s)}\cdot \frac{\mathbf{c}}{2}\}\big\},
    \end{equation}
    where $\kappa_0>0$ is the minimal curvature of $\partial\Omega$, $\mu=\min\{\eta,1/2\}$ and $C_4>0$ is a constant depending only on $\kappa_0,\mathbf{c}, N, c_2(s)$.
    
    Consequently, if $\tilde{s}=\bigl(\frac{\mathbf{c}/2-\epsilon_0}{c_1}\bigr)^{1/\alpha}\frac{\mathbf{c}}{2}$ and $f\in C^\beta([0,\tilde{s}])$, then, for
    \[ s_1<\min\bigl\{\bigl(\frac{\mathbf{c}/2-\epsilon_0}{c_1}\bigr)^{1/\alpha}, \bigl(\frac{\kappa_0}{C_4}\bigr)^{1/\mu}, \frac{{\mathbf{c}/2-\epsilon_0}}{c_2(\tilde{s})}\bigr\}\frac{\mathbf{c}}{2},\] 
    the set $\{u>s_1\}$ is convex.
\end{lemma}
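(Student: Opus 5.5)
The plan is to compute the curvature of the level sets $\{u=u(x)\}$ near $\partial\Omega$ by comparing $D^2u(x)$ with $D^2u(y)$ at the nearest boundary point $y$. Then $\kappa_u$ inherits the boundary curvature up to a Hölder error, and positive curvature of the level hypersurfaces gives convexity of the superlevel sets.

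\textbf{Step 1: the boundary value of $\kappa_u$.} At $y\in\partial\Omega$ we have $u=0$ and $\nabla u(y)=|\nabla u(y)|\nu(y)$. In the local coordinates of Definition \ref{character}, $u(x',\varphi_{y}(x'))=0$. Differentiating twice at $x'=0$ gives $u_{ij}(y)=-u_N(y)\,\partial_{ij}\varphi_{y}(0)$ for $i,j<N$. Hence for unit $\tau\perp\nabla u(y)$,
\[ \frac{-D^2u(y)[\tau,\tau]}{|\nabla u(y)|}=D^2\varphi_y(0)[\tau,\tau]\ge\kappa_0, \]
so $\kappa_u(y)\ge\kappa_0$.

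\textbf{Step 2: propagation into the interior.} Take $x$ in the indicated sublevel region and let $y$ be a nearest boundary point, so $d(x,\partial\Omega)=|x-y|$. By Corollary \ref{Cor1}(2) (using $c_2(s)$ as a Lipschitz bound for $\nabla u$ there), the restriction on $u(x)$ forces $d(x,\partial\Omega)\le \frac{\mathbf{c}/2-\epsilon_0}{c_2(s)}$. This keeps $x$ in the strip where $|\nabla u|>\mathbf{c}/2$ and Lemma \ref{C^{2,beta}} applies. There $|D^2u(x)-D^2u(y)|\le c_2(s)|x-y|^\eta$ and $|\nabla u(x)-\nabla u(y)|\le c_2(s)|x-y|$.

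For unit $\tau\perp\nabla u(x)$, let $\tilde\tau$ be the normalized projection of $\tau$ onto $\nabla u(y)^\perp$. Then $|\tau-\tilde\tau|\lesssim|\nabla u(x)-\nabla u(y)|/|\nabla u(y)|\lesssim (c_2/\mathbf{c})|x-y|$. Splitting the difference of the quotients into three parts gives an error of order $C(\mathbf{c},c_2)\,(|x-y|^\eta+|x-y|)$:
\begin{enumerate}[(i)]
\item the Hessian difference,
\item the change of direction $\tau\to\tilde\tau$,
\item the change of the denominator $|\nabla u(x)|$ versus $|\nabla u(y)|$, both of which are $\ge\mathbf{c}/2$.
\end{enumerate}
Since $|x-y|$ is bounded, both powers are dominated by a constant times $|x-y|^\mu$. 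Taking $\mu=\min\{\eta,1/2\}$ is harmless and also absorbs any square-root loss if one prefers to estimate $|\tau-\tilde\tau|$ via $\sqrt{1-(\tau\cdot\hat\nu)^2}$. This yields \eqref{convex}.

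\textbf{Step 3: convexity of superlevel sets.} Fix $s_1$ below the stated threshold. Then on $\{u=s_1\}$ we have $|\nabla u|>\mathbf{c}/2$ and $\kappa_u>0$. The level set $\{u=s_1\}$ is then a compact $C^2$ hypersurface whose second fundamental form, with respect to the inward normal $\nabla u/|\nabla u|$, is positive definite, since it equals $\kappa_u$-type quantities $-D^2u[\tau,\tau]/|\nabla u|$. It bounds $\{u>s_1\}$.

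To pass from local to global convexity, use a Hadamard-type theorem: a compact connected hypersurface with positive definite second fundamental form bounds a convex body. Connectedness of $\{u=s_1\}$ is obtained by flowing along $\nabla u/|\nabla u|^2$ from $\partial\Omega$, which is connected because $\Omega$ is convex. This flow is a diffeomorphism $\partial\Omega\to\{u=s_1\}$ since $\nabla u\neq0$ on $\{0\le u\le s_1\}$. Alternatively, one can argue by continuity in $r\in[0,s_1]$: the set $\{u>0\}=\Omega$ is convex, and a strictly locally convex boundary cannot lose convexity along this smooth family.

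\textbf{Main obstacle.} I expect the main obstacle to be the bookkeeping in Step 2, namely controlling the direction change $\tau\to\tilde\tau$ and making sure all points stay inside the region where Lemma \ref{C^{2,beta}} gives uniform $C^{2,\eta}$ bounds. The local-to-global step is standard.
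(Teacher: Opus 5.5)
Your proposal is correct and follows essentially the same route as the paper. The paper uses the boundary identity $u_{ij}=-|\nabla u|\phi_{ij}$ to get $\kappa_u\ge\kappa_0$ on $\partial\Omega$, compares $D^2u(x)$ with $D^2u(y)$ at a nearest boundary point via the same three error terms, and concludes with positivity of the second fundamental form on $\{u=s_1\}$, connectedness via the flow of $\nabla u/|\nabla u|^2$, and Hadamard's theorem. The one difference is in the direction change: your normalized projection gives a bound linear in $|x-y|$, whereas the paper bounds $|\tau\cdot\nu(y)|$ through $2-2\,\nabla u(x)\cdot\nu(y)/|\nabla u(x)|$ and loses a square root. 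That loss is the source of the $1/2$ in $\mu=\min\{\eta,1/2\}$, so your estimate would even allow $\mu=\eta$.
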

\begin{proof}
    Write $c_2=c_2(s)$ for simplicity. Since $u=0$ on $\partial\Omega$ and $u>0$ in $\Omega$, we have $\nabla u(x) = |\nabla u(x)|\nu(x)$ on $\partial\Omega$. Since $\partial\Omega$ is uniformly convex and of class $C^{2,\beta}$, consider a rectangular coordinate frame with origin at $x$, the $x_N$-axis being along $\nu(x)$. Then $\partial\Omega$ is locally expressed as
    \[ x_N = \phi(x_1,\dots,x_{N-1}),\quad \phi\in C^{2,\beta}.\]
    Since $u=0$ on $\partial\Omega$, we have
    \begin{equation}\label{boundary}
        u(x_1,\dots,x_{N-1},\phi)=0.
    \end{equation}
    Write $\phi_k:=\frac{\partial \phi}{\partial x_k}$ and $u_k:=\frac{\partial u}{\partial x_k}$ for simplicity. Under this coordinate, the inner normal of $\Omega$ is
    \[ (1+\sum_{k=1}^{N-1} \phi_k^2)^{-1/2}(-\phi_1,\dots,-\phi_{N-1},1).\] 
    Since $\nu(x) = (0,\dots,0,1)$, we obtain $\phi_i=0$, $i=1,\dots,N-1$, at $x$. Differentiating \eqref{boundary} with respect to $x_i$, $i=1,\dots,N-1$, we obtain that in a small neighborhood of $x$,
    \begin{equation}\label{boundary2}
        u_i + u_N\phi_i = 0.
    \end{equation}
    As a result, at $x$, we have $u_i=0$ for $i=1,\dots,N-1$ and $u_N = |\nabla u|$. Differentiating \eqref{boundary2} with respect to $x_j$, $j=1,\dots,N-1$, we obtain that at $x$,
    \begin{equation}\label{uij at x}
        u_{ij} + |\nabla u| \phi_{ij} = 0,\quad i,j=1,\dots,N-1.
    \end{equation}
    Hence, for any unit vector $\tau\perp \nabla u(x)$, writing $\tau=(\tau_1,\dots,\tau_{N-1},0)$ gives
    \begin{align}\label{curve1}
        \frac{-D^2u(x)[\tau,\tau]}{|\nabla u(x)|} = -\sum_{i,j=1}^{N-1} \frac{u_{ij}}{|\nabla u|}\tau_i\tau_j = \sum_{i,j=1}^{N-1}\phi_{ij}\tau_i\tau_j \ge \kappa_0.
    \end{align}
    Hence \eqref{convex} holds for all $x\in \partial\Omega$.

    For $x\in \{0\le u<s\}$, choose $y\in\partial\Omega$ such that $|x-y|=d(x,\partial\Omega)$. Then, by the preceding argument, $\frac{x-y}{|x-y|}=\nu(y)$ and
    \begin{equation}\label{est for xy}
        \nabla u(x) \cdot \nu(y) \ge |\nabla u(y)| - c_2|x-y|,\quad |\nabla u(x)| \le |\nabla u(y)| + c_2|x-y|.
    \end{equation}
    As a result,
    \[ \bigl|\frac{\nabla u(x)}{|\nabla u(x)|} -\nu(y)\bigr|^2 = 2 - 2\frac{\nabla u(x)\cdot\nu(y)}{|\nabla u(x)|} \le 2 - 2\frac{|\nabla u(y)|-c_2|x-y|}{|\nabla u(y)|+c_2|x-y|} \le \frac{4c_2|x-y|}{\mathbf{c}-\epsilon_0}.\]
    For $\tau\perp \nabla u(x)$ and $|\tau|=1$,
    \begin{equation}\label{est tau1}
        |\tau\cdot\nu(y)| = \bigl|\tau\cdot \bigl(\frac{\nabla u(x)}{|\nabla u(x)|}-\nu(y)\bigr)\bigr| \le \frac{2c_2^{1/2}|x-y|^{1/2}}{(\mathbf{c}-\epsilon_0)^{1/2}}.
    \end{equation}
    Set $\tau_T=\tau-(\tau\cdot\nu(y))\nu(y)$. Then $|\tau_T|^2 = |\tau|^2- |\tau\cdot\nu(y)|^2 = 1-|\tau\cdot\nu(y)|^2$. By \eqref{curve1}--\eqref{est tau1},
    \begin{align*}
        &\frac{-D^2u(x)[\tau,\tau]}{|\nabla u(x)|}\\
        &= \frac{(D^2u(y)-D^2u(x))[\tau,\tau]}{|\nabla u(x)|}+\frac{-D^2u(y)([\tau,\tau]-[\tau_T,\tau_T])}{|\nabla u(x)|} + \frac{-D^2u(y)[\tau_T,\tau_T]}{|\nabla u(x)|}\\
        &\ge -\frac{c_2|x-y|^{\eta}}{|\nabla u(x)|} - \frac{(2N-1)c_2|\tau\cdot\nu(y)|}{|\nabla u(x)|} + \frac{\kappa_0|\tau_T|^2|\nabla u(y)|}{|\nabla u(x)|} \\
        &\ge \kappa_0 \bigl(1-\frac{c_2|x-y| + |\tau\cdot\nu(y)|^2|\nabla u(y)|}{|\nabla u(y)|+c_2|x-y|}\bigr) - \frac{c_2|x-y|^\eta + (2N-1)c_2|\tau\cdot\nu(y)|}{|\nabla u(y)|-c_2|x-y|} \\
        &\ge \kappa_0 - \frac{\kappa_0\big(c_2 + \frac{4c_2}{\mathbf{c}-\epsilon_0} \big)|x-y|}{\mathbf{c}-\epsilon_0}-\frac{c_2|x-y|^{\eta}+ 2(2N-1)c_2^{3/2}(\mathbf{c}-\epsilon_0)^{-1/2}|x-y|^{1/2}}{\mathbf{c}-\epsilon_0 - c_2|x-y|} \\
        & \ge \kappa_0 - C_4|x-y|^{\min\{\eta,1/2\}}
    \end{align*}
    as long as $d(x,\partial\Omega) = |x-y|\le \frac{\mathbf{c}/2-\epsilon_0}{c_2}$ and $\epsilon_0 < \mathbf{c}/2$. Here $C_4$ is a constant depending only on $\kappa_0,c_2,\mathbf{c}, N$. Since by Corollary \ref{Cor1} (2), $d(x,\partial\Omega) \le \frac{2}{\mathbf{c}}u(x) < \frac{2s}{\mathbf{c}}$, we derive \eqref{convex}.

    If $f\in C^\beta([0,\tilde{s}])$, for $s_1<\tilde{s}=\bigl(\frac{\mathbf{c}/2-\epsilon_0}{c_1}\bigr)^{1/\alpha}\frac{\mathbf{c}}{2}$, by Corollary \ref{Cor1} (2), $|\nabla u|>\mathbf{c}/2>0$ in $\{0<u<\tilde{s}\}$, $\Omega_{2s_1/\mathbf{c}}\subset \{u>s_1\}$ and $\partial\{u>s_1\}$ is a $C^1$ hypersurface. Hence, when $s_1<\min\bigl\{\bigl(\frac{\kappa_0}{C_4}\bigr)^{1/\mu},\frac{\mathbf{c}/2 - \epsilon_0}{c_2(\tilde{s})}\bigr\}\frac{\mathbf{c}}{2}$, by $2s_1/\mathbf{c} < \bigl(\frac{\kappa_0}{C_4}\bigr)^{1/\mu}$ and \eqref{convex}, we have
    \[ \kappa_u(x) \ge \kappa_0 - C_4 d(x,\partial\Omega)^\mu \ge \kappa_0 - C_4(2s_1/\mathbf{c})^\mu > 0,\quad \forall\, x\in \partial\{u>s_1\},\]
    and hence the second fundamental form of $\partial\{u>s_1\}$ is positive definite at every point.
    
    Since $|\nabla u|>\mathbf{c}/2>0$ in $\{0<u<\tilde{s}\}$, define $X(x):= \frac{\nabla u(x)}{|\nabla u(x)|^2}$ and denote its flow by $\Phi_t(x)$. Then
    \[ \frac{d}{dt} u(\Phi_t(x)) = \nabla u(\Phi_t(x))\cdot X(\Phi_t(x)) = 1.\]
    Thus, starting from $x\in\partial\Omega$, we have $u(\Phi_t(x))=t$. Conversely, for each $y\in \{u=s_1\}$, flowing backward yields $u(\Phi_{-s_1}(y))=0$. Therefore, $\Phi_{s_1}:\partial\Omega \to \{u=s_1\}$ is a diffeomorphism, so $\{u=s_1\}$ is connected. Hence, by the Hadamard global convexity theorem \cite{Hadamard1898, Mohammadpouri2012}, $\{u>s_1\}$ is convex.
\end{proof}

\begin{proof}[Proof of Corollary \ref{maincor1}]
    Throughout this proof, $C_0:=\lVert u\rVert_{C^1(\bar\Omega)}$ and $C_1:=\lVert f\rVert_{C^0([0,C_0])}$.

    \noindent \textbf{Case 1: $\Omega$ and $f$ satisfy (i).}

    By Lemma \ref{regular1}, condition (b) is satisfied for $L=c_1$ and some $\alpha\in (0,1]$ depending only on $N$, $p$, $\diam(\Omega)$, $C_0$, $C_1$ and $\mathcal{C}_{2,\beta}(\Omega)$.
    
    Set
    \[\tilde{s}:= \bigl(\frac{\mathbf{c}}{4c_1}\bigr)^{1/\alpha}\frac{\mathbf{c}}{2}.\]
    Since $f\in C^{\beta}([0,\tilde{s}])$ and $\epsilon_0<\mathbf{c}/4$, Lemma \ref{C^{2,beta}} gives $u\in C^{2,\eta}(\{0\le u \le s_1\}$ for every $0<s_1<\tilde{s}$.
    
    By Lemma \ref{Lem convex}, for
    \[ 0\le c\le s_*:=\min\bigl\{\bigl(\frac{\mathbf{c}}{4c_1}\bigr)^{1/\alpha}, \bigl(\frac{\kappa_0}{C_4}\bigr)^{1/\mu}, \frac{\mathbf{c}}{4c_2(\tilde{s})}\bigr\}\frac{\mathbf{c}}{2},\]
    the set $\{u>c\}$ is convex, and
    \[ u \in C^{2,\eta}(\{0\le u\le s_1\}),\]
    for any $0<s_1<s_*$ with $\eta=\min\{1/2,\beta,\alpha\}$. Then conditions (c) and (d) hold.
    
    Since now we have $u\in C^2(\{0\le u \le s_1\})$ for any $0<s_1<s_*$, we can replace all the $(L,\alpha)$ in the proof of Theorem \ref{thm1} and \ref{mainthm} by $(c_2(\tilde{s}), 1)$ and get \eqref{main}:
    \[ |\Omega\Delta B_{\Omega}(q)| \le C''\epsilon_0^{1/4} \]
    for $C'' = 2N(1+2(2/\mathbf{c})^{p-1} C_1 \diam(\Omega)) C_9$. The constant $C_9$ is
    \[ C_9 = C_8\bigl(\frac{2\diam(\Omega)}{\rho_i}\bigr)^2\Big( A^{-1/2} + A^{1/2} \Big) + 2(2/\mathbf{c})^{p}C_2A^{1/2}\]
    with
    \begin{align*}
        & C_2 = C_1|\Omega|,\\
        & C_3 = 4pC_0C_1|\Omega|\diam(\Omega) \big( 2C_1(2/\mathbf{c})^p + (p-1)2^{2p}\mathbf{c}^{-1} (c_1+1) \big),\\
        & C_4 \,\,\text{as in Lemma \ref{Lem convex}},\\
        & C_5 = 2(p-1) \max\{(1/2)^{p-2}, (3/2)^{p-2}\} ,\\
        & C_6 = \max\bigl\{\frac{C_3}{\mathbf{c}^{p-1}}, \frac{C_5C_2}{\mathbf{c}}\bigl(\frac{2}{\mathbf{c}}\bigr)^{p-1}, \frac{C_5C_2c_2(\tilde{s})}{\mathbf{c}}\bigl(\frac{2}{\mathbf{c}}\bigr)^{p}\bigr\} \\
        & C_7 \,\,\text{as in \eqref{steiner stab}},\\
        & C_8 = 2C_6^{1/2} C_7|\Omega|^{\frac{N+1}{2N}}(Nw_N^{1/N})^{-1/2}, \\
        & A = \min\bigl\{\bigl(\frac{\kappa_0}{C_4}\bigr)^{1/\mu}, \frac{\mathbf{c}}{4c_2(\tilde{s})}, \bigl(\frac{\mathbf{c}}{4c_1}\bigr)^{1/\alpha}, \frac{\rho_i}{2}, \frac{2}{\mathbf{c}}\bigr\}\frac{\mathbf{c}}{2}.
    \end{align*}
    We deduce that the constant $C''$ depends only on $N$, $p$, $\mathcal{C}_{2,\beta}(\Omega)$, $\mathbf{c}$, $\lVert f\rVert_{C^\beta[0,C_0]}$, $\lVert u\rVert_{C^1(\bar\Omega)}$, $\kappa_0$, $\diam(\Omega)$ and the inradius $\rho_i$ of $\Omega$.

    \noindent \textbf{Case 2: $\Omega$ and $f$ satisfy (ii).}

    By Lemma \ref{regular1}, condition (b) is satisfied for $L=c_1$ and some $\alpha\in (0,1]$ depending only on $N$, $p$, $\diam(\Omega)$, $\mathcal{C}_2(\Omega)$, $C_0$, $C_1$.
    
    By Lemma \ref{concave}, $\log u$ is concave. Hence condition (c) holds for every $r\in [0,C_0]$, and we may take $s_*=C_0$. Condition (d) is satisfied because $\Omega$ is convex and $\partial\Omega$ is of class $C^2$.

    Hence, by Theorem \ref{mainthm}, we see that \eqref{main est 2} holds for some $C''$ depending only on $N$, $p$, $\mathbf{c}$, $\lVert f\rVert_{C^\beta[0,C_0]}$, $\lVert u\rVert_{C^1(\bar\Omega)}$, $\diam(\Omega)$, $\mathcal{C}_2(\Omega)$ and the inradius $\rho_i$ of $\Omega$.

    If $\partial\Omega$ is of class $C^{2,\beta}$, set
    \[\tilde{s}:= \bigl(\frac{\mathbf{c}}{4c_1}\bigr)^{1/\alpha}\frac{\mathbf{c}}{2}\]
    The H\"{o}lder continuity of $f$ and Lemma \ref{C^{2,beta}} imply that $u\in C^{2,\eta}(\{0\le u \le s_1\})$ for any $0<s_1<\tilde{s}$, where $\eta = \min\{\beta,\alpha\}$. Thus, we may replace $(L,\alpha)$ by $(c_2(\tilde{s}),1)$ in the proofs of Theorems \ref{thm1} and \ref{mainthm}. Therefore, one may take $\alpha=1$ in \eqref{main est 2}.
\end{proof}

\subsection{General Divergence-Form Elliptic Operators}
\begin{proof}[Proof of Corollary \ref{Cor2}]
    Set
    \[ C_1=\lVert f\rVert_{C^\beta(\R)},\quad  C_0 = \lVert u\rVert_{C^{1}(\bar\Omega)}, \quad g_0 = \bigl\lVert \frac{xg'(x)}{g(x)}\bigr\rVert_{C^{0,\theta}[\mathbf{c}/2,{3\mathbf{c}}/2]},\]  
    and define $G(x) :=\int_0^x g(y)\,dy$. The assumptions on $g$ imply that $G$ is strictly convex and increasing. After a rotation, we may assume that $e_N \in A_u$.

    \noindent \textbf{1. $C^{1,\alpha}$ regularity and its consequences.}

    For the generalized equation, the structural conditions in \cite{Lieberman1988} imply the analogue of Lemma \ref{regular1}. Let $c_1 := \lVert u\rVert_{C^{1,\alpha}(\bar\Omega)}$. Then conclusions (1) and (2) of Corollary \ref{Cor1} hold with $L=c_1$. Integrating \eqref{g ver eq} yields the following analogue of conclusion (3):
    \[ \int_{\{u=r\}} g(|\nabla u|)\,d\mathcal{H}^{N-1}(x) \le C_2 = C_1|\Omega|.\]

    \noindent \textbf{2. $C^{2,\eta}$ regularity and convexity.}
    
    Since $\epsilon_0<\min\{1,\mathbf{c}/4\}$, set
    \[\tilde{s}:=\bigl(\frac{\mathbf{c}}{4c_1}\bigr)^{1/\alpha}\frac{\mathbf{c}}{2}.\]
    When $x\in \{0<u<\tilde{s}\}$, $|\nabla u|\in [\mathbf{c}/2,3\mathbf{c}/2]$ and \eqref{g ver eq} is equivalent to
    \[  - \sum_{i,j=1}^N \bigl( \frac{g'(|\nabla u|)|\nabla u| - g(|\nabla u|)}{g(|\nabla u|)} \frac{u_i u_j}{|\nabla u|^2} + \delta_{ij} \bigr) u_{ij} = f(u)\frac{|\nabla u|}{g(|\nabla u|)}. \]
    The coefficient matrix on the left-hand side has eigenvalues $1$ and $\frac{g'(|\nabla u|)|\nabla u|}{g(|\nabla u|)}\in [\lambda_g,\Lambda_g]$. Thus, it is uniformly elliptic and belongs to $C^{0,\min\{\alpha,\theta\}}$. Since $\nabla u \in C^{\alpha}(\bar\Omega)$, $f\in C^{\beta}(\R)$, and $\partial \Omega \in C^{2,\beta}$, the Schauder estimates give that for any $0<s_1<\tilde{s}$,
    \begin{align*}
        & \lVert u\rVert_{C^{2,\eta}(\{0\le u\le s_1\})} \\
        & \le C(\mathcal{C}_{2,\beta}(\Omega),\diam(\Omega), N,p,\lambda_g,\Lambda_g) (\lVert u\rVert_{L^\infty(\Omega)} + \lVert f(u)|\nabla u|/ g(|\nabla u|) \rVert_{C^{\eta}(\Omega)})=:c_2
    \end{align*}
    for $\eta=\min\{\beta,\alpha,\theta\}$. Here $c_2$ depends only on $N$, $\lambda_g$, $\Lambda_g$, $\mathbf{c}$, $\diam(\Omega)$, $\mathcal{C}_{2,\beta}(\Omega)$, $\lVert f\rVert_{C^{\beta}(\R)}$, $\lVert u\rVert_{C^1(\bar\Omega)}$ and $g_0$.
    
    Repeating the argument of Lemma \ref{Lem convex}, we find that $\{u>s_1\}$ is convex for all
    \[0\le s_1 < s_*:=\min\bigl\{\bigl(\frac{\mathbf{c}}{4c_1}\bigr)^{1/\alpha}, \bigl(\frac{\kappa_0}{C_4}\bigr)^{1/\mu}, \frac{\mathbf{c}}{4c_2}\bigr\}\frac{\mathbf{c}}{2}.\]
    Since $G$ is strictly convex and increasing, for $\lambda_1,\lambda_2 \in (0,1/2)$ and $\xi\in(0,1)$,
    \begin{align*}
        &\int_K \xi b(\lambda_1)G\bigl(\frac{a(\lambda_1)}{b(\lambda_1)}\bigr) + (1-\xi) b(\lambda_2)G\bigl(\frac{a(\lambda_2)}{b(\lambda_2)}\bigr) \\
        & \ge \int_K \big( \xi b(\lambda_1) + ( 1-\xi) b(\lambda_2)\big)G\bigl(\frac{\xi a(\lambda_1) + (1-\xi) a(\lambda_2 )}{\xi b(\lambda_1) + (1-\xi) b(\lambda_2)}\bigr) \\
        & = \int_K b(\xi \lambda_1 + (1-\xi)\lambda_2) G\bigl(\frac{\xi a(\lambda_1) + (1-\xi) a(\lambda_2 )}{b(\xi \lambda_1 + (1-\xi)\lambda_2)}\bigr) \\
        & \ge \int_K b(\xi \lambda_1 + (1-\xi)\lambda_2) G\bigl(\frac{a(\xi \lambda_1 + (1-\xi)\lambda_2 )}{b(\xi \lambda_1 + (1-\xi)\lambda_2)}\bigr),
    \end{align*}
    where $b(\lambda), a(\lambda)$ and $K$ are defined as in Lemma \ref{good convex}. Hence, by $e_N\in A_u\subset A_{u_{\gamma,s}}$,
    \[ \int_{\R^N} G(|\nabla u_{\gamma,s}^t|)\,dx = \int_K b(\lambda) G\bigl(\frac{a(\lambda)}{b(\lambda)}\bigr) + b(1-\lambda) G\bigl(\frac{a(1-\lambda)}{b(1-\lambda)}\bigr)\]
    is convex with respect to $\lambda = \frac{1-e^{-t}}{2} \in [0,1/2]$ as long as $0<s< s_*$.
    
    \noindent \textbf{3. Difference of the energy.}

    We claim that if $0<s<s_*$ and $0<\gamma<\min\{\epsilon_0,s/2\}$, then
    \begin{equation}\label{g ver Steiner est}
        \int_{\R^N} G(|\nabla u_{\gamma,s}|) - \int_{\R^N}G(|\nabla (u_{\gamma,s})^\infty|) \le \tilde{C_3}\epsilon_0
    \end{equation}
    with $\tilde{C_3}$ depending only on $N$, $\mathbf{c}$, $g$, $C_0$, $\lVert f\rVert_{C^\beta(\R)}$, $\diam(\Omega)$ and $\mathcal{C}_{2,\beta}(\Omega)$.

    In fact, we only need to modify the proof of Proposition \ref{Prop 1} slightly. By the definition of $C_0$, the Lipschitz constant of $u$ is at most $C_0$. In \textbf{Step one}, when testing \eqref{g ver eq} by $(u-\gamma)_+^t - (u-\gamma)_+$, we derive that
    \begin{align*}
        \int_{\Omega} f(u) [(u-\gamma)_+^t - (u-\gamma)_+]\,dx ={}& \int_{\Omega} g(|\nabla u|)\frac{\nabla u}{|\nabla u|}\cdot \nabla ((u-\gamma)_+^t - (u-\gamma)_+)\,dx \\
        \le {}& \int_{\{u^t>\gamma\}}g(|\nabla u|)|\nabla u^t|\,dx - \int_{\{u>\gamma\}} g(|\nabla u|)|\nabla u|\,dx,
    \end{align*}
    and so
    \begin{align*}
        & \int_{\{u^t>\gamma\}}g(|\nabla u|)|\nabla u^t|\,dx - \int_{\{u>\gamma\}} g(|\nabla u|)|\nabla u|\,dx \\
        & \ge -2C_1C_0\diam(\Omega)t|\{0<u<\gamma\}|+o(t),
    \end{align*}
    where $\lVert f(u)\rVert_{L^\infty(\Omega)} \le C_1$ by the definition of $C_1$.

    In \textbf{Step two}, let $h(y)=yg(y)-G(y)$, then $h'(y) = yg'(y)$. Since $u\in C^{2,\eta}(\{0<u<s_*\})$, when
    \[\gamma < \frac{\mathbf{c}^2}{8c_2}-C_0\diam(\Omega)t,\]
    the gradient estimate becomes
    \[ ||\nabla u|-\mathbf{c}| \le c_2 d(x,\partial\Omega) + \epsilon_0 \le c_2\frac{2}{\mathbf{c}} (C_0\diam(\Omega)t+\gamma) + \epsilon_0 < \mathbf{c}\]
    for every $|u-\gamma|<C_0\diam(\Omega)t$. Thus,
    \begin{align*}
        & \int_{\{u^t>\gamma\}} h(|\nabla u|)\,dx - \int_{\{u>\gamma\}} h(|\nabla u|)\, dx\\
        ={}& \int_{\{u^t>\gamma\}} \big(h(|\nabla u|)-h(\mathbf{c})\big)\,dx - \int_{\{u>\gamma\}} \big(h(|\nabla u|)-h(\mathbf{c})\big)\, dx \\
        \le{}& \int_{\{\gamma-C_0\diam(\Omega)t\le u \le \gamma + C_0\diam(\Omega)t\}} |h(|\nabla u|)-h(\mathbf{c})|\,dx \\
        \le{}& \sup_{x\in [0, 2\mathbf{c}]} |h'(x)| \int_{\gamma-C_0\diam(\Omega)t}^{\gamma + C_0\diam(\Omega)t} \int_{\{u=y\}} |\nabla u|^{-1}||\nabla u|-\mathbf{c}|\,d\mathcal{H}^{N-1}dy \\
        \le{}& \sup_{x\in [0, 2\mathbf{c}]} |xg'(x)|\Big( \frac{2c_2}{\mathbf{c}}(C_0\diam(\Omega)t+\gamma)+\epsilon_0\Big)\frac{2/\mathbf{c}}{g(\mathbf{c}/2)}\\
        & \qquad \cdot \int_{\gamma-C_0\diam(\Omega)t}^{\gamma + C_0\diam(\Omega)t} \int_{\{u=y\}} g(|\nabla u|)\,d\mathcal{H}^{N-1}dy \\
        \le{}& \Lambda_g g(2\mathbf{c})\Big(  \frac{2c_2}{\mathbf{c}}(C_0\diam(\Omega)t+\gamma)+\epsilon_0\Big)\frac{2/\mathbf{c}}{g(\mathbf{c}/2)}2C_0C_2\diam(\Omega)t.
    \end{align*}
    Indeed, \eqref{eigen} gives $\sup_{x\in [0, 2\mathbf{c}]} |xg'(x)| \le \sup_{x\in [0,2\mathbf{c}]} \Lambda_g g(x) \le \Lambda_g g(2\mathbf{c})$.

    In \textbf{Step three}, \eqref{Lem3.2 in Wei} gives, for
    \[C_0\diam(\Omega)t<\gamma < \frac{\mathbf{c}^2}{8c_2}-C_0\diam(\Omega)t,\]
    there holds
    \begin{align*}
        &\int_{\{u^t>\gamma\} } G(|\nabla u^t|)\,dx - \int_{ \{u>\gamma\} }G(|\nabla u|)\,dx \\
        &\ge \int_{\{u^t>\gamma\}} g(|\nabla u|)|\nabla u^t|\,dx - \int_{\{u>\gamma\}} g(|\nabla u|)|\nabla u|\,dx\\
        & \quad - \bigl(\int_{\{u^t>\gamma\}} h(|\nabla u|)\,dx - \int_{\{u>\gamma\}} h(|\nabla u|)\,dx\bigr)\\
        &\ge \Big(-2C_1C_0\diam(\Omega)|\{0<u<\gamma\}| \\
        & \qquad - \sup_{x\in [0, 2\mathbf{c}]} |xg'(x)|\Big(\frac{2c_2}{\mathbf{c}}(C_0\diam(\Omega)t+\gamma)+\epsilon_0\Big)\frac{2/\mathbf{c}}{g(\mathbf{c}/2)}2C_0C_2\diam(\Omega)\Big)t + o(t).
    \end{align*}
    Thus, by \textbf{Step four}, since
    \[ |\{0<u<\gamma\}| \le C_2 \frac{2/\mathbf{c}}{g(\mathbf{c}/2)} \gamma,\]
    we can deduce \eqref{g ver Steiner est} with $\tilde{C_3} =  \frac{4/\mathbf{c}}{g(\mathbf{c}/2)}C_0C_1|\Omega|\diam(\Omega) \big( \Lambda_g g(2\mathbf{c})(2c_2/\mathbf{c} + 1) + C_1\big)$.

    \noindent \textbf{4. Difference of the perimeter.}

    The Lipschitz property still holds: $\lVert \nabla u_{\gamma,s}^\infty\rVert_{L^\infty(\Omega)}\le \lVert \nabla u_{\gamma,s}\rVert_{L^\infty(\Omega)} \le \mathbf{c}+\epsilon_0 + c_2 (2s/\mathbf{c})$. By the definition of $s_*$ and \eqref{c quarter prop},
    \[c_2(2s/\mathbf{c})<\mathbf{c}/4<\mathbf{c}/2-\epsilon_0.\]
    
    Note that $\int_{\R^N} G(|\nabla u_{\gamma,s}|)\,dx = \int_\gamma^s \int_{\{u=y\}} \frac{G(|\nabla u|)}{|\nabla u|}\,d\mathcal{H}^{N-1}\,dy$. Let $H(x) := \frac{G(x)}{x}$. Since $g$ is increasing, we have $G(x) = \int_0^x g(y)\,dy \le xg(x)$, and hence $H'(x) = \frac{xg(x)-G(x)}{x^2} \ge 0$. Thus, $H$ is increasing.
    \[ \int_\gamma^s \int_{\{u=y\}} \frac{G(|\nabla u|)}{|\nabla u|}\,d\mathcal{H}^{N-1}\,dy \ge H(\mathbf{c}-\epsilon_0 - c_2 (2s/\mathbf{c}) )\int_\gamma^s P(E_y)\,dy.\]
    Similarly,
    \[ \int_\gamma^s \int_{\{u^\infty=y\}} \frac{G(|\nabla u^\infty|)}{|\nabla u^\infty|}\,d\mathcal{H}^{N-1}\,dy \le H(\mathbf{c}+\epsilon_0 + c_2 (2s/\mathbf{c}) )\int_\gamma^s P(E_y^\infty)\,dy.\]
    Thus,
    \begin{align*}
        \tilde{C_3}\epsilon_0 \ge{}& \int_{\R^N} G(|\nabla u_{\gamma,s}|)\,dx-\int_{\R^N} G(|\nabla u_{\gamma,s}^\infty|)\,dx \\
        \ge{}& H(\mathbf{c}+\epsilon_0 + c_2 (2s/\mathbf{c}) ) \int_\gamma^s (P(E_y)-P(E_y^\infty)) \,dy\\
        &\quad - \frac{2 g(\mathbf{c}+\epsilon_0 + c_2 (2s/\mathbf{c}) )}{\mathbf{c}-\epsilon_0 - c_2 (2s/\mathbf{c}) }(\epsilon_0 + c_2 (2s/\mathbf{c}) )\int_\gamma^s P(E_y)\,dy\\
        \ge{}& H(\mathbf{c})\int_\gamma^s (P(E_y)-P(E_y^\infty))\,dy - \frac{4 g(3\mathbf{c}/2)}{\mathbf{c}}(\epsilon_0 + c_2 (2s/\mathbf{c}) )\int_\gamma^s P(E_y)\,dy,
    \end{align*}
    where the second and third inequalities follow because
    \[ H'(x) = \frac{g(x)}{x}-\frac{G(x)}{x^2} \le \frac{g(x)}{x} \le \frac{g(\mathbf{c}+\epsilon_0 + c_2 (2s/\mathbf{c}) )}{\mathbf{c}-\epsilon_0 - c_2 (2s/\mathbf{c})} \le \frac{g(3\mathbf{c}/2)}{\mathbf{c}/2},\]
    for all $x\in [\mathbf{c}-\epsilon_0 - c_2 (2s/\mathbf{c}) ,\mathbf{c}+\epsilon_0 + c_2 (2s/\mathbf{c}) ]\subset [\mathbf{c}/2,3\mathbf{c}/2]$. By $|\nabla u|>\mathbf{c}/2$ in $\{0\le u\le s_*\}$ and $g$ is monotone,
    \[ P(E_y) = \int_{\{u=y\}} 1\,d\mathcal{H}^{N-1} \le \frac{1}{g(\mathbf{c}/2)}\int_{\{u=y\}} g(|\nabla u|)\,d\mathcal{H}^{N-1} \le \frac{C_2}{g(\mathbf{c}/2)},\quad \forall\, \gamma\le y \le s.\]
    Thus, we deduce that
    \[ \int_\gamma^s \big(P(E_y)-P(E_y^\infty)\big)\,dy \le \frac{\tilde{C_3}}{H(\mathbf{c})}\epsilon_0 + \frac{4C_2g(3\mathbf{c}/2)}{\mathbf{c} g(\mathbf{c}/2)}( \epsilon_0 + c_2 (2s/\mathbf{c}) )s.\]
    By $\gamma<s/2$, there exists $s_0\in [s/2,s]$ such that
    \begin{align*}
        P(E_{s_0})-P(E_{s_0}^\infty) \le{}& \frac{2\tilde{C_3}}{H(\mathbf{c})}\frac{\epsilon_0}{s} + \frac{8C_2g(3\mathbf{c}/2)}{\mathbf{c} g(\mathbf{c}/2)}( \epsilon_0 + c_2 (2s/\mathbf{c}) )\\
        \le{}& 2\tilde{C_6} (\epsilon_0 + s + \epsilon_0/s),
    \end{align*}
    where
    \[ \tilde{C_6} = \max\bigl\{\frac{\tilde{C_3}}{H(\mathbf{c})}, \frac{4C_2g(3\mathbf{c}/2)}{\mathbf{c} g(\mathbf{c}/2)}, \frac{4C_2g(3\mathbf{c}/2)}{\mathbf{c} g(\mathbf{c}/2)}\frac{2c_2}{\mathbf{c}}\bigr\}. \]

    \noindent \textbf{5. Stability.}
    
    If $s<1$, repeating the argument of Theorem \ref{thm1} yields
    \[ \inf_{y\in\R} |\Omega \Delta (\Omega^\infty+(0,y))| \le \tilde{C_8} \mathcal{E}^2(s^{1/2} + \epsilon_0^{1/2}/s^{1/2}) + \frac{2C_2s}{g(\mathbf{c}/2)(\mathbf{c}/2)},\]
    where
    \[ \tilde{C_8} = 2\tilde{C_6}^{1/2} C_7 |\Omega|^{\frac{N+1}{2N}}(N w_N^{1/N})^{-1/2}\] 
    and $\mathcal{E}$ is the ratio of the circumradius to the inradius of $E_{s_0}$. Let $s=\min\{s_*,\frac{\mathbf{c}\rho_i}{4},1\}\epsilon_0^{1/2}$, where $\rho_i$ is the inradius of $\Omega$. Then
    \[ \mathcal{E} \le \frac{\diam(\Omega)}{\rho_i - 2s_0/\mathbf{c}} \le \frac{2\diam(\Omega)}{\rho_i},\]
    and so
    \[ \inf_{y\in\R} |\Omega \Delta (\Omega^\infty+(0,y))| \le \tilde{C_9}\epsilon_0^{1/4}\]
    with
    \[ \tilde{C_9} = \tilde{C_8}\bigl(\frac{2\diam(\Omega)}{\rho_i}\bigr)^2 (B^{1/2}+B^{-1/2}) + \frac{2C_1|\Omega|B}{g(\mathbf{c}/2)(\mathbf{c}/2)},\quad B = \min\bigl\{s_*,\frac{\mathbf{c}\rho_i}{4}, 1\bigr\}.\]
    Finally, the argument used to prove Theorem \ref{mainthm} yields
    \[ |\Omega \Delta B_\Omega(q)| \le 2N \big(1+ \frac{2}{g(\mathbf{c}/2)}C_1\diam(\Omega)\big)\tilde{C_9} \epsilon_0^{1/4}.\]
    The constant depends only on $N$, $\mathbf{c}$, $\beta$, $\kappa_0$, $g$, $\diam(\Omega)$, $\rho_i$, $\lVert f\rVert_{C^{\beta}(\R)}$, $\lVert u\rVert_{C^1(\bar\Omega)}$ and $\mathcal{C}_{2,\beta}(\Omega)$, and we are done.
\end{proof}

\section*{Acknowledgements}
The authors gratefully acknowledge the support described in the funding
statement below.

\paragraph{Funding.}
Wenming Zou is supported by the National Key Research and Development Program
of China (Grant No.~2023YFA1010001) and the National Natural Science Foundation
of China (Grant No.~12571123).

\section*{Declaration of AI use}
All theorems and their proofs in this article were completed by the authors.
The authors used GPT-5.6 for language polishing and spell-checking.

\bibliographystyle{plain}
\bibliography{ref}

\end{document}